\newcommand{\N}{\mathbb{N}}
\newcommand{\tens}{\otimes}
\newcommand{\ot}{\otimes}
\DeclareMathOperator{\HH}{HH}
\DeclareMathOperator{\Img}{Im}
\DeclareMathOperator{\Ker}{Ker}
\newcommand{\bu}{\setlength{\unitlength}{1pt}\begin{picture}(2.5,2)
               (1,1)\put(2,2.5){\circle*{2}}\end{picture}}
\numberwithin{equation}{section}
\newtheorem{defi}[equation]{Definition}
\newtheorem{rema}[equation]{Remark}
\newtheorem{theo}[equation]{Theorem}
\newtheorem{lemm}[equation]{Lemma}
\newtheorem{lemma}[equation]{Lemma}
\title[Gerstenhaber brackets of general twisted tensor products]{Gerstenhaber brackets on Hochschild cohomology \\ 
of general twisted tensor products}
\author[T. Karada\u{g}]{Tekin Karada\u{g}}
\address{Department of Mathematics, Texas A\&M University, 
College Station, Texas 77843, USA}
\email{tekinkaradag@math.tamu.edu}
\author[D. McPhate]{Dustin McPhate}
\address{Department of Mathematics, Texas A\&M University, 
College Station, Texas 77843, USA}
\email{dmcphate@math.tamu.edu}
\author[P.\ S.\ Ocal]{Pablo S.\ Ocal}
\address{Department of Mathematics, Texas A\&M University, 
College Station, Texas 77843, USA}
\email{pso@math.tamu.edu}
\author[T.\ Oke]{Tolulope Oke}
\address{Department of Mathematics, Texas A\&M University, 
College Station, Texas 77843, USA}
\email{toluoke@math.tamu.edu}
\author[S.\ Witherspoon]{Sarah Witherspoon}
\address{Department of Mathematics, Texas A\&M University, 
College Station, Texas 77843, USA}
\email{sjw@math.tamu.edu}
\date{July 22, 2024}
\thanks{Partially supported by NSF grant 1665286.}
\keywords{Hochschild cohomology, Gerstenhaber brackets, twisted tensor products, Jordan plane.}
\begin{document}


\begin{abstract}
We present techniques for computing Gerstenhaber brackets
on Hochschild cohomology of general twisted tensor product algebras. 
These techniques involve twisted tensor product resolutions
and are based on recent results on Gerstenhaber brackets
expressed on arbitrary bimodule resolutions.
\end{abstract}

\maketitle

\section{Introduction}\label{sec:introduction}

\v{C}ap, Schichl, and Van\v{z}ura~\cite{TTPA} observed that  
whenever an algebra over a field has underlying vector space
given by a tensor product of two subalgebras, 
it takes the form of a twisted tensor product algebra.
Multiplication is given by a twisting map that determines how to move elements of one subalgebra past the other.
In the special case that the twisting is given by a bicharacter,
techniques were developed in~\cite{GNW}
for computing the Lie algebra structure on the Hochschild
cohomology of a twisted tensor product algebra. 
Here we generalize these techniques, describing how to compute
Gerstenhaber brackets for a general
twisted tensor product algebra 
on a twisted tensor product resolution
from~\cite{RTTP} based on the computational methods of~\cite{NW}.
We illustrate this with the Jordan plane, an 
example that was computed by Lopes and Solotar~\cite{LS}
using completely different methods.
A special case of a twisted tensor product algebra is a skew
group algebra, and there is a parallel 
development of Gerstenhaber bracket techniques for
skew group algebras in~\cite{SW19}. 
We generalize some of those results here, pointing out
additional necessary conditions in the general case.
Gerstenhaber brackets are notoriously difficult to compute,
and having a variety of techniques at hand is important.
Our results here add to the collection of techniques available. 

The contents of the paper are as follows:
We define general twisted tensor product algebras $A\ot_{\tau}B$
in Section~\ref{sec:twisted}
and describe the construction of 
twisted tensor product resolutions
of $A\ot_{\tau}B$-bimodules~\cite{RTTP}.
We then recall from~\cite{GG,RTTP} the case of bar 
and Koszul resolutions.  
Our main results are in Section~\ref{sec:Gbracket-techniques} 
where we prove that under some conditions, Gerstenhaber brackets can be computed on
twisted tensor product resolutions using techniques from~\cite{NW};
see Theorem~\ref{mainthm}, which generalizes results in~\cite{GNW,SW19}. 
We illustrate this with the Jordan plane in 
Section~\ref{sec:Jordan}.

\section{Twisted tensor product algebras and resolutions}\label{sec:twisted}

In this section, we recall the notions of general twisted tensor product
algebras from~\cite{TTPA} and their twisted tensor product resolutions 
from~\cite{RTTP}. 
More details may be found in those papers. 

Let $k$ be a field and denote $\otimes := \otimes_k$.
Let $A,B$ be $k$-algebras and 
let $A^e$, $B^e$ be their respective enveloping algebras
($A^e=A\ot A^{op}$, $B^e=B\ot B^{op}$). A bijective $k$-linear map $\tau:B\otimes A\longrightarrow A\otimes B$ is called a \emph{twisting map} if $\tau(1_B\otimes a) = a\otimes 1_B$ and $\tau(b \otimes 1_A) = 1_A \otimes b$ for all $a\in A$, $b\in B$
where $1_A, 1_B$ are the multiplicative identities of $A,B$, respectively,
and the following diagram commutes:
\begin{equation}\label{def:tau}
\xymatrix{B\otimes B\otimes A\otimes A \ar[r]^-{m_B\otimes m_A} \ar[d]_-{1\otimes\tau\otimes 1} & B\otimes A \ar[r]^-{\tau} & A\otimes B\\
B\otimes A\otimes B\otimes A \ar[r]^-{\tau\otimes \tau} & A\otimes B\otimes A\otimes B \ar[r]^-{1\otimes \tau\otimes 1} & A\otimes A\otimes B\otimes B \ar[u]_-{m_A\otimes m_B}}
\end{equation}
The \emph{twisted tensor product algebra}, denoted $A\otimes_{\tau} B$, is the vector space $A\otimes B$ with multiplication:
\begin{equation*}
\xymatrix{m_{\tau}: (A\otimes B)\otimes(A\otimes B) \ar[rr]^-{1\otimes \tau\otimes 1} && A\otimes A\otimes B\otimes B \ar[rr]^-{m_A\otimes m_B} && A\otimes B} .
\end{equation*}
This multiplication is associative provided that $\tau$ is a twisting map. 
The inverse map $\tau^{-1}:A\ot B\rightarrow B\ot A$ satisfies analogous
conditions, and there is an isomorphism of algebras
$A\ot_{\tau} B\stackrel{\sim}{\longrightarrow} 
B\ot_{\tau^{-1}} A$ given by $\tau^{-1}$. 

We will often work with algebras graded by the natural numbers:
Suppose that $A = \oplus_{n\in \N} A_n$ and $B =\oplus_{n\in \N}B_n$,
where $\N$ is understood to contain~0 and $1_A\in A_0$, $1_B\in B_0$.
Then the vector space $A\ot B$ is graded with 
$(A\ot B)_n = \oplus_{i+j=n}A_i\ot B_j$ for all $n$, and $B\ot A$
is graded similarly. 
We say that a twisting map $\tau$ is {\em graded} if
$\tau((B\ot A)_n)= (A\ot B)_n$ for all $n$, and that it is
{\em strongly graded} if $\tau (B_j\ot A_i)=  A_i\ot B_j$
for all $i,j$.

An $A$-bimodule $M$, whose bimodule structure is given by $\rho_A:A\otimes M\otimes A\longrightarrow M$, is said to be \emph{compatible with $\tau$} if there exists a bijective $k$-linear map $\tau_{B,M}:B\otimes M\longrightarrow M\otimes B$ such that the following diagram commutes:
\begin{equation*}
\xymatrix{B\otimes B\otimes M \ar[r]^-{1\otimes \tau_{B,M}} \ar[d]_-{m_B\otimes 1} & B\otimes M\otimes B \ar[r]^-{\tau_{B,M}\otimes 1} & M\otimes B\otimes B \ar[d]^-{1\otimes m_B}\\
B\otimes M \ar[rr]^-{\tau_{B,M}} & & M\otimes B\\
B\otimes A\otimes M\otimes A \ar[u]^-{1\otimes \rho_A} \ar[d]_-{\tau\otimes 1\otimes 1} & & A\otimes M\otimes A\otimes B \ar[u]_-{\rho_A\otimes 1}\\
A\otimes B\otimes M\otimes A \ar[rr]^-{1\otimes \tau_{B,M}\otimes 1} & & A\otimes M\otimes B\otimes A \ar[u]_-{1\otimes 1\otimes \tau}}
\end{equation*}
We can analogously define how a $B$-bimodule $N$ is compatible with $\tau$ via a bijective $k$-linear map $\tau_{N,A}: N\ot A\rightarrow A\ot N$.

Note that the $A^e$-module $M=A$ is itself compatible with $\tau$,
taking $\tau_{B,A}$ to be simply the map $\tau$:
We can see that the upper part of the diagram above commutes by considering
diagram~(\ref{def:tau}) applied to elements for which the rightmost
tensor factor is $1_A$;
the bottom part commutes by iterating the commutativity expressed
by diagram~(\ref{def:tau}) and taking the leftmost tensor factor to be $1_B$.

We next define an $A\ot_{\tau}B$-bimodule structure on the 
tensor product of compatible bimodules. 

\begin{defi}\label{rem:bimod-str} {\em 
$\mathbf{Bimodule}\, \,  \mathbf{Structure} : $
Let $M$ and $N$ be $A$- and $B$-bimodules via $\rho_A:A\ot M\ot A \rightarrow M$
and $\rho_B:B\ot N\ot B\rightarrow N$, respectively. 
Assume that $M$, $N$ are compatible with 
$\tau$ via $\tau_{B,M}$, $\tau_{N,A}$.
Then $M\otimes N$ has a natural structure of an $A\otimes_{\tau} B$-bimodule via $\rho_{A\otimes_{\tau} B}$, given by the following commutative diagram:
\begin{equation*}
\xymatrix{
(A\otimes_{\tau} B)\otimes (M\otimes N) \otimes (A\otimes_{\tau} B) \ar[d]_-{1\otimes\tau_{B,M}\otimes \tau_{N,A}\otimes 1} \ar[rr]^-{\rho_{A\otimes_{\tau} B}} && M\otimes N\\
A\otimes M\otimes B\otimes A\otimes N\otimes B \ar[rr]^-{1\otimes1\otimes\tau \otimes 1\otimes 1} && A\otimes M\otimes A\otimes B\otimes N\otimes B \ar[u]_-{\rho_A\otimes\rho_B}}
\end{equation*} }
\end{defi}

Next we recall the 
notion of compatibility of resolutions. Let $P_{\bu}(M)$ be an $A^e$-projective resolution of $M$ and $P_{\bu}(N)$ a $B^e$-projective resolution of $N$:
\begin{align*}
\cdots \longrightarrow P_2(M) \longrightarrow P_1(M) \longrightarrow P_0(M) \longrightarrow M \longrightarrow 0,\\
\cdots \longrightarrow P_2(N) \longrightarrow P_1(N) \longrightarrow P_0(N) \longrightarrow N \longrightarrow 0.
\end{align*}
We consider the complexes $P_{\bu}(N)\otimes A$, $A\otimes P_{\bu}(N)$, $P_{\bu}(M)\otimes B$, $B\otimes P_{\bu}(M)$. As exact sequences of vector spaces, note that the $k$-linear maps $\tau_{N,A}:N\otimes A\longrightarrow A\otimes N$ and $\tau_{B,M}:B\otimes M\longrightarrow M\otimes B$ can be lifted to $k$-linear chain maps:
\begin{equation*}
\tau_{P_{\bu}(N),A}:P_{\bu}(N)\otimes A\longrightarrow A\otimes P_{\bu}(N),\quad \tau_{B,P_{\bu}(M)}:B\otimes P_{\bu}(M)\longrightarrow P_{\bu}(M)\otimes B,
\end{equation*}
which we will denote more simply by $\tau_{i,A} := \tau_{P_{i}(N),A}$ and $\tau_{B,i} := \tau_{B,P_{i}(M)}$. Given $M$ an $A$-bimodule that is compatible with $\tau$, we say that a projective $A^e$-resolution $P_{\bu}(M)$ is \emph{compatible with $\tau$} if each $P_{i}(M)$ is compatible with $\tau$ via a map $\tau_{B,i}:B\otimes P_{i}(M)\longrightarrow P_{i}(M)\otimes B$ such that $\tau_{B,\bu}$ is a chain map lifting $\tau_{B,M}$. Given a $B$-bimodule $N$ compatible with $\tau$ we can analogously define how a projective $B^e$-resolution $P_{\bu}(N)$ is compatible with $\tau$ via $\tau_{\bu,A}$.

Provided the resolutions $P_{\bu}(M)$, $P_{\bu}(N)$ are compatible with $\tau$, 
by~\cite[Lemma 3.5]{RTTP}, the total complex of the 
tensor product complex $P_{\bu}(M)\ot P_{\bu}(N)$
has homology concentrated in degree~0, where it is $M\ot N$.
If in addition, 
each $(A\ot_{\tau} B)^e$-module $P_i(M)\ot P_j(N)$ is projective,
it follows immediately that this complex is a projective resolution,
called a {\em twisted tensor product resolution}:

\begin{theo}\label{thm:ttpr}
Let $M,N$ be $A$- and $B$-bimodules that are compatible with
a twisting map $\tau: B\ot A\rightarrow A\ot B$.
Let $P_{\bu}(M)$, $P_{\bu}(N)$ be projective
$A^e$- and $B^e$-module resolutions of $M$ and $N$, respectively, that are compatible
with $\tau$.
If each $P_i(M)\ot P_j(N)$ is a projective $(A\ot_{\tau} B)^e$-module 
under the
module structure given  in Definition~\ref{rem:bimod-str}, then
the total complex of  $P_{\bu}(M)\ot
P_{\bu}(N)$ is a projective $(A\ot_{\tau} B)^e$-module resolution
of $M\ot N$.
\end{theo}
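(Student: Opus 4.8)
The plan is to deduce the statement by combining the homological input already isolated as \cite[Lemma 3.5]{RTTP} with the projectivity hypothesis; granting that lemma, the argument is short. Recall first that the total complex of $P_{\bu}(M)\ot P_{\bu}(N)$ has in homological degree $n$ the term $\bigoplus_{i+j=n}P_i(M)\ot P_j(N)$, a \emph{finite} direct sum indexed by $0\le i\le n$ with $j=n-i$, and differential assembled from $d^M\ot 1$ and $1\ot d^N$ with the usual signs, where $d^M$, $d^N$ denote the differentials of $P_{\bu}(M)$, $P_{\bu}(N)$. The augmentation is the tensor product $P_0(M)\ot P_0(N)\to M\ot N$ of the two augmentations of the given resolutions.

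First I would verify that this is a complex of $(A\ot_{\tau} B)^e$-modules. Each term $P_i(M)\ot P_j(N)$ carries the $(A\ot_{\tau} B)^e$-structure of Definition~\ref{rem:bimod-str}, built from the compatibility maps $\tau_{B,i}$ and $\tau_{j,A}$. The content here is that the maps $d^M\ot 1$ and $1\ot d^N$ are $(A\ot_{\tau} B)^e$-homomorphisms, and that the augmentation is as well: this is exactly where the hypothesis that $\tau_{B,\bu}$ and $\tau_{\bu,A}$ are chain maps lifting $\tau_{B,M}$, $\tau_{N,A}$ enters, since those chain-map identities say precisely that the differentials and augmentations intertwine the structure maps used to define the action.

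Next I would observe that every total-complex term is projective over $(A\ot_{\tau} B)^e$: it is a finite direct sum of the modules $P_i(M)\ot P_j(N)$, each projective by hypothesis, and a finite direct sum of projectives is projective.

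Finally, \cite[Lemma 3.5]{RTTP} gives that the homology of the total complex is concentrated in degree~$0$, where it is $M\ot N$; together with the augmentation, this says the augmented total complex is exact. Combined with the projectivity of each term, this is precisely the assertion that the total complex is a projective $(A\ot_{\tau} B)^e$-resolution of $M\ot N$. Since exactness is furnished by the cited lemma, no serious obstacle remains; the only points needing care are checking that the differentials and augmentation respect the $(A\ot_{\tau} B)^e$-action---controlled by the compatibility and chain-map hypotheses---and the finiteness of the direct sums that makes each term projective. The substantive homological work lives entirely in \cite[Lemma 3.5]{RTTP}.
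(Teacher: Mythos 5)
Your proposal matches the paper's argument: the paper likewise deduces the theorem by citing \cite[Lemma 3.5]{RTTP} for the fact that the total complex has homology concentrated in degree $0$, equal to $M\ot N$, and then observes that the projectivity hypothesis on each $P_i(M)\ot P_j(N)$ makes the conclusion immediate. Your extra verifications---that the differentials and augmentation are $(A\ot_{\tau}B)^e$-module maps via the chain-map compatibility hypotheses, and that each total-complex term is a finite direct sum of projectives---are left implicit in the paper (they are part of the cited machinery of \cite{RTTP}), so this is the same proof, spelled out slightly more fully.
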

 
We will see directly that the hypotheses of the theorem are true of
all the resolutions that we consider in this paper.
Alternatively, see~\cite[Theorem 3.10]{RTTP} for additional hypotheses
ensuring that all modules $P_i(M)\ot P_i(N)$ are projective.
Next, we explain in detail the special cases where $M=A$,
$N=B$, and $P_{\bu}(A)$, $P_{\bu}(B)$ are either bar resolutions
or Koszul resolutions (when $A$, $B$ are Koszul algebras).
We will then be able to use these twisted tensor product resolutions
to compute the Hochschild cohomology $\HH^*(A\ot_{\tau}B)$ as
${\rm{Ext}}^*_{(A\ot_{\tau}B)^e}(A\ot_{\tau}B, A\ot_{\tau}B)$.

Consider the sequence of left $A^e$-modules (equivalently $A$-bimodules):
\begin{equation*}
\cdots \stackrel{d_3}{\longrightarrow} A^{\otimes 4} \stackrel{d_2}{\longrightarrow} A^{\otimes 3} \stackrel{d_1}{\longrightarrow} A^{\otimes 2} \stackrel{m_A}{\longrightarrow} A \longrightarrow 0,
\end{equation*}
with differentials, for all $n\geq 1$:
\begin{equation}\label{def:dif}
d_n(a_0\otimes\cdots\otimes a_{n+1}) = \sum_{i=0}^n (-1)^i a_0\otimes\cdots\otimes a_ia_{i+1}\otimes\cdots \otimes a_{n+1}.
\end{equation}
The \emph{bar resolution} $\mathbb{B}(A)$ is the truncated complex, that is, $\mathbb{B}_n(A) = A^{\otimes (n+2)}$ for all $n\geq 0$. Since $k$ is a field, $\mathbb{B}(A)$ is a free left $A^e$-module resolution of $A$. The module structure can be expressed as $\rho_{A,A^{\otimes n}}: A\otimes A^{\otimes n}\otimes A \longrightarrow A^{\otimes n}$ where $\rho_{A,A^{\otimes n}}(a\otimes c_1\otimes \cdots \otimes c_n\otimes b) = a c_1\otimes \cdots \otimes c_n b$ for all $a,b,c_1,\ldots,c_n$, that is $\rho_{A,A} = m_A (1\otimes m_A)$ and $\rho_{A,A^{\otimes n}} = m_A\otimes 1_{n-2} \otimes m_A$ for $n\geq 2$, where $1_{n-2}$ denotes the identity map on $A^{\ot (n-2)}$.

The bar resolution of $A$ is compatible with $\tau$ 
(see, e.g.,~\cite{GG} or~\cite[Proposition 2.20(i)]{RTTP}):

\begin{theo}\label{thm:bar-compatible}
Let $A,B$ be $k$-algebras and 
let $\tau :B\ot A\rightarrow A\ot B$ be a twisting map.
The bar resolutions $\mathbb{B}(A)$ and $\mathbb{B}(B)$
are compatible with $\tau$.
\end{theo}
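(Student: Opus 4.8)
The plan is to construct the compatibility maps $\tau_{B,n}\colon B\otimes\mathbb{B}_n(A)\to\mathbb{B}_n(A)\otimes B$ explicitly by iterating $\tau$, and then to verify the three defining conditions of compatibility in turn: that each $\tau_{B,n}$ makes $\mathbb{B}_n(A)$ into an $A$-bimodule compatible with $\tau$, that the family $\tau_{B,\bu}$ is a chain map, and that it lifts $\tau_{B,A}=\tau$. Since $\mathbb{B}_n(A)=A^{\otimes(n+2)}$, the natural candidate is to transport the single tensor factor $B$ from the far left to the far right by applying $\tau$ once across each of the $n+2$ copies of $A$:
\begin{equation*}
\tau_{B,n}=(1_{A^{\otimes(n+1)}}\otimes\tau)\circ(1_{A^{\otimes n}}\otimes\tau\otimes 1_A)\circ\cdots\circ(\tau\otimes 1_{A^{\otimes(n+1)}}),
\end{equation*}
a composite of $n+2$ maps, each an instance of $\tau$ in the appropriate tensor slot. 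This map is bijective because $\tau$ is.

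First I would verify that each $\mathbb{B}_n(A)$, with the $A^e$-structure $\rho_{A,A^{\otimes n}}$ given by multiplication on the two outer factors, is compatible with $\tau$ via $\tau_{B,n}$; that is, the hexagonal diagram defining bimodule compatibility commutes. Its upper square asks that transporting a product $bb'$ past all the $A$-factors at once agrees with transporting $b'$ and then $b$; this follows by induction on $n$ from the same property of $\tau$, which is the specialization of diagram~(\ref{def:tau}) obtained by taking the rightmost tensor factor to be $1_A$, exactly as in the remark preceding Definition~\ref{rem:bimod-str}. Its lower portion asks for compatibility with $\rho_{A,A^{\otimes n}}$, whose action only multiplies into the outermost factors $a_0$ and $a_{n+1}$; since $\tau_{B,n}$ transports $B$ past these via $\tau$, this too reduces to~(\ref{def:tau}).

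The crux is the chain map property $(d_n\otimes 1_B)\circ\tau_{B,n}=\tau_{B,n-1}\circ(1_B\otimes d_n)$ for all $n\geq 1$. Writing $d_n$ as the alternating sum~(\ref{def:dif}) of maps applying $m_A$ to an adjacent pair $a_ia_{i+1}$, the identity decomposes term by term. For a fixed term, the passages of $B$ past the $A$-factors other than $a_i$ and $a_{i+1}$ are identical on the two sides and cancel, so the claim reduces to the assertion that transporting $b$ past the single product $a_ia_{i+1}$ coincides with transporting $b$ past $a_i$ and then past $a_{i+1}$. This is precisely the specialization of~(\ref{def:tau}) with leftmost tensor factor $1_B$. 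The same computation applied to the augmentation $m_A\colon\mathbb{B}_0(A)\to A$ shows that $\tau_{B,\bu}$ lifts $\tau_{B,A}=\tau$.

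I expect the main obstacle to be bookkeeping rather than conceptual difficulty: tracking the tensor slots through the iterated composite defining $\tau_{B,n}$, and matching the signs in~(\ref{def:dif}) against the term-by-term application of~(\ref{def:tau}). Everything rests on diagram~(\ref{def:tau}), which governs both the interaction of $\tau$ with $m_A$ (used for the chain map property and the lift) and with $m_B$ (used for the upper square of the compatibility hexagon). Finally, the statement for $\mathbb{B}(B)$ follows by the symmetric construction: one defines $\tau_{n,A}\colon\mathbb{B}_n(B)\otimes A\to A\otimes\mathbb{B}_n(B)$ to transport the factor $A$ leftward past each copy of $B$, and repeats the argument using the analogous conditions satisfied by $\tau^{-1}$.
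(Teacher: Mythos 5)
Your proposal is correct and takes essentially the same route as the paper, whose proof is precisely ``iteration of the commutativity of diagram~(\ref{def:tau})'': your explicit iterated definition of $\tau_{B,n}$, the term-by-term reduction of the chain-map identity for the differential~(\ref{def:dif}) to the specialization of~(\ref{def:tau}) with leftmost factor $1_B$, and the reduction of the bimodule-compatibility hexagon to the specialization with rightmost factor $1_A$ are exactly the intended details. One minor correction: for $\mathbb{B}(B)$ the compatibility maps $\tau_{n,A}\colon \mathbb{B}_n(B)\otimes A\to A\otimes \mathbb{B}_n(B)$ are again built by iterating $\tau$ itself (moving the single $A$-factor leftward past each copy of $B$), so no appeal to $\tau^{-1}$ is needed; the argument is the mirror image of the one for $\mathbb{B}(A)$, using the symmetric specializations of~(\ref{def:tau}).
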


The proof, as described in~\cite{RTTP}, is iteration of the commutativity
of diagram~(\ref{def:tau}).

Now let $V$ be a finite dimensional vector space over $k$ and let $T(V) = \oplus_{n\in \mathbb{N}}T^n(V)$ denote the tensor algebra of $V$, so that $T^n(V) = V\otimes\cdots\otimes V$ ($n$ tensor factors) for each $n\in\mathbb{N}$. 
Then $T(V)$ may be considered to be a graded algebra, where for any $v\in V$ we use $|v|$ to denote its degree and assign $|v| = 1$.

Given a vector subspace $R\subseteq V\otimes V = T^2(V)$, set $A = T(V)/(R)$ where $(R)$ denotes the ideal generated by $R$ in $T(V)$. 
Then $A$ is a 
graded algebra generated by elements in degree $1$ with relations $R$ in degree $2$. Consider the sequence of $A$-bimodules:
\begin{equation*}
\cdots \stackrel{d_4}{\longrightarrow} \mathbb{K}_3(A) \stackrel{d_3}{\longrightarrow} A\otimes R\otimes A \stackrel{d_2}{\longrightarrow} A\otimes V\otimes A \stackrel{d_1}{\longrightarrow} A\otimes A \stackrel{m_A}{\longrightarrow} A \longrightarrow 0,
\end{equation*}
where $\mathbb{K}_0(A) = A\otimes A$, $\ \mathbb{K}_1(A) = A\otimes V\otimes A$, $\ \mathbb{K}_2(A) = A\otimes R\otimes A$ and 
\[
   \mathbb{K}_n(A) = A\otimes \bigcap_{i=0}^{n-2}\left(V^{\otimes i}
        \otimes R\otimes V^{\otimes (n-i-2)} \right)\otimes A
\]
for $n\geq 3$, with differentials given by \eqref{def:dif} (that is, the same differentials as in the bar resolution, viewing each $\mathbb{K}_n(A)$ as a subset
of $A^{\ot (n+2)}$).
Since $m_A(R)=0$, the image of $\mathbb{K}_n(A)$ under $d_n$ is
indeed contained in $\mathbb{K}_{n-1}(A)$.
We see in this way that $\mathbb{K}(A)$
is a subcomplex of the bar resolution. 

We say that $A$ is a \textit{Koszul algebra} whenever the truncated complex $\mathbb{K}(A)$ is a resolution of $A$ as an $A^e$-module. 
This definition of Koszul algebra is equivalent to
other definitions in the literature
(see~\cite[Proposition 19]{Kra} or~\cite[Theorem 3.4.6]{HCSW}). 
In this case, we say that $\mathbb{K}(A)$ is the \textit{Koszul resolution} of $A$. 

Koszul resolutions are compatible with any strongly graded
twisting map $\tau$~\cite{GG,RTTP}.
This follows for example by using  
techniques in the proof of~\cite[Proposition 1.8]{Walton-W2} (see also 
\cite[Corollary 4.19]{JLS} or \cite[p.\ 90, Example 3]{PP}).
The idea is that 
the above compatibility statements for the bar resolution
descend to the Koszul resolution.
Specifically, we have the following statement,
which is essentially~\cite[Proposition 2.20(iii)]{RTTP}.
We give just a sketch of a proof here, for completeness.

\begin{theo}\label{thm:Koszul-compatible}
Let $A$ be a Koszul algebra, let $B$ be a graded algebra,
and let $\tau:B\otimes A\rightarrow A\otimes B$
be a strongly graded twisting map.
Then the Koszul resolution $\mathbb{K}(A)$ is compatible with $\tau$.
If $B$ is also a Koszul algebra, 
its Koszul resolution $\mathbb{K}(B)$ is compatible with $\tau$. 
\end{theo}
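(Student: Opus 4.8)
The plan is to construct the required maps $\tau_{B,i}$ for $\mathbb{K}(A)$ as restrictions of the corresponding maps for the bar resolution $\mathbb{B}(A)$, whose existence is guaranteed by Theorem~\ref{thm:bar-compatible}. Recall that $\mathbb{K}(A)$ is a subcomplex of $\mathbb{B}(A)$ with the same differentials, and that the bar compatibility map $B\ot A^{\ot(n+2)}\to A^{\ot(n+2)}\ot B$, as in the proof of Theorem~\ref{thm:bar-compatible}, is obtained by iterating $\tau$ so as to move the single tensor factor $B$ past each of the $n+2$ copies of $A$ in turn. Writing $\mathbb{K}_n(A)=A\ot W_n\ot A$ with $W_n=\bigcap_{i=0}^{n-2}V^{\ot i}\ot R\ot V^{\ot(n-i-2)}\subseteq V^{\ot n}\subseteq A^{\ot n}$ for the middle factor (and $W_0=k$, $W_1=V$), this identifies $\mathbb{K}_n(A)$ as a subbimodule of $\mathbb{B}_n(A)$. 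It therefore suffices to show that the bar map sends $B\ot\mathbb{K}_n(A)$ into $\mathbb{K}_n(A)\ot B$: the compatibility diagram and the chain map property for $\tau_{B,\bu}$ then follow immediately by restriction from the corresponding statements for $\mathbb{B}(A)$.

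The heart of the argument is a statement about the middle factor $W_n$. Since $\tau$ is strongly graded and $V=A_1$, we have $\tau(B\ot V)\subseteq V\ot B$; write $\tau_V\colon B\ot V\to V\ot B$ for this restriction, so that the iterated bar map on $B\ot V^{\ot n}$ is built entirely from copies of $\tau_V$. The key claim, which is the main obstacle, is the degree-two compatibility
\[
\tau_V^{(2)}(B\ot R)\subseteq R\ot B,\qquad\text{where }\ \tau_V^{(2)}:=(1\ot\tau_V)(\tau_V\ot 1)\colon B\ot V^{\ot 2}\to V^{\ot 2}\ot B.
\]
To prove this I would specialize diagram~(\ref{def:tau}) by setting its leftmost tensor factor equal to $1_B$; the resulting commutativity says precisely that moving $B$ past a product $v_1v_2$ agrees with moving it past $v_1$ and then $v_2$, i.e. $\tau\circ(1\ot m_A)=(m_A\ot 1)\circ\tau_V^{(2)}$ on $B\ot V^{\ot 2}$. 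Restricting to $V^{\ot 2}$, the multiplication $m_A$ becomes the quotient map $\pi\colon V^{\ot 2}\to A_2=V^{\ot 2}/R$, giving $(\pi\ot 1)\circ\tau_V^{(2)}=\tau\circ(1\ot\pi)$. Applied to $b\ot r$ with $r\in R$ this yields $(\pi\ot 1)\tau_V^{(2)}(b\ot r)=\tau(b\ot 0)=0$, whence $\tau_V^{(2)}(b\ot r)\in\ker(\pi\ot 1)=R\ot B$, as claimed.

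With the degree-two statement in hand, the general case is bookkeeping. For each fixed $i$, moving $B$ past $V^{\ot i}\ot R\ot V^{\ot(n-i-2)}$ keeps the first $i$ and last $n-i-2$ factors in $V$ by strong grading and keeps the middle two factors in $R$ by the claim above applied in positions $i+1,i+2$; hence the iterated map preserves each such summand, and intersecting over $i$ shows it preserves $W_n$. Prepending and appending the two outer copies of $A$, handled by the full map $\tau$, shows that the bar map restricts to a map $\tau_{B,n}\colon B\ot\mathbb{K}_n(A)\to\mathbb{K}_n(A)\ot B$. For bijectivity I would run the same reasoning for the inverse twisting map $\tau^{-1}\colon A\ot B\to B\ot A$, which is again strongly graded: its degree-two analogue shows $(\tau_V^{(2)})^{-1}$ carries $R\ot B$ back into $B\ot R$, so that $\tau_V^{(2)}$ restricts to a bijection on the relevant subspaces and, together with the bijectivity of $\tau$ on the two outer factors, so does $\tau_{B,n}$ on $\mathbb{K}_n(A)$. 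This establishes that $\mathbb{K}(A)$ is compatible with $\tau$. Finally, when $B$ is also Koszul, the statement for $\mathbb{K}(B)$ follows from the symmetric argument: using the isomorphism $A\ot_{\tau}B\cong B\ot_{\tau^{-1}}A$ to interchange the roles of $A$ and $B$ and replacing $\tau$ by the strongly graded map $\tau^{-1}$, the identical reasoning applies verbatim.
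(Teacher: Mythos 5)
Your proposal is correct and takes essentially the same route as the paper's (sketched) proof: both restrict the iterated-$\tau$ compatibility maps of the bar resolution from Theorem~\ref{thm:bar-compatible} to the Koszul subcomplex, using strong grading to preserve the $V$-factors and the twisting-map identity together with $R=\Ker\bigl(m_A|_{V\ot V}\bigr)$ to preserve the $R$-factors. Your kernel argument $\tau_V^{(2)}(B\ot R)\subseteq \Ker(\pi\ot 1)=R\ot B$ and the bijectivity check via the strongly graded inverse $\tau^{-1}$ simply fill in details that the paper's sketch leaves implicit.
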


\begin{proof}
Define a map $\tau_n : B\ot A^{\ot (n+2)}\rightarrow A^{\ot (n+2)}\ot B$ by iterating
$\tau$: First let $\tau_0 = (1\ot \tau)(\tau\ot 1)$, then let
$\tau_1 = (1\ot 1\ot \tau)(1\ot \tau\ot 1)(\tau\ot 1\ot 1)$, 
and so on.
We claim that the subcomplex ${\mathbb{K}}(A)$ of the bar resolution is
preserved by $\tau_{\bu}$ in the following sense.
As $\tau$ is strongly graded, $\tau(B\ot V)= V\ot B$,
so $\tau_1(B\ot A \ot V\ot A)= A\ot V\ot A\ot B$.
Since $\tau$ is a twisting map and $m_A(R)=0$, we find that 
$\tau_2(B\ot A\ot R\ot A)= A\ot R\ot A \ot B$.
In general, 
\[
   \tau_n(B\ot \mathbb{K}_n(A))= \mathbb{K}_n(A)\ot B 
\]
for all $n$.
Note that the map $\tau_{\bu}$ is a chain map since $\tau$ is a twisting
map and the differential is given by~(\ref{def:dif}).
The first statement now follows from Theorem~\ref{thm:bar-compatible},
by restricting $\tau_{\bu}$ to ${\mathbb{K}}(A)$.
The second statement is similar.
\end{proof}

\section{Gerstenhaber bracket for twisted tensor products}\label{sec:Gbracket-techniques}
In this section, we begin by summarizing techniques from~\cite{NW}
for computing Gerstenhaber brackets on the Hochschild
cohomology ring $\HH^*(R)$ of a $k$-algebra $R$, as reformulated 
in~\cite[Section 6.4]{HCSW}.
We will then take $R$ to be a twisted tensor product algebra $A\ot_{\tau}B$
and develop further techniques for handling a twisted tensor product
algebra specifically, generalizing work in~\cite{GNW,SW19}. 

The graded Lie algebra structure on the Hochschild cohomology ring $\HH^*(R)$
was historically 
defined on the bar resolution, with equivalent recent definitions on
other resolutions such as in~\cite{NW,Volkov}. 
Here we will simply take a formula from~\cite{NW}, 
stated in Theorem~\ref{thm:bracket} below,
to be our definition of the Gerstenhaber bracket, 
and refer to the
cited literature for proof that it is equivalent to the historical definition. 

Let $K$ be a projective resolution of $R$ as an $R^e$-module, with
differential $d$ and augmentation map $\mu : K_0\rightarrow R$.
Assume that $(K,d)$ is a 
{\em counital differential graded coalgebra}, i.e.~there is a 
diagonal map $\Delta:K\rightarrow K\ot_R K$
(a chain map lifting $R\stackrel{\sim}{\longrightarrow} R\ot_R R$)
 such that
$(\Delta\ot 1) \Delta = (1\ot \Delta)\Delta$ and $(\mu\ot 1)\Delta
= 1 = (1\ot \mu)\Delta$
where we take $\mu$ to be the zero map on $K_i$ for all $i>0$.

Here and elsewhere, viewing $K$ as a graded vector space,
we adopt the following standard sign convention for tensor
products of maps:
If $V$, $W$, $V'$, $W'$ are graded vector spaces and $g: V\rightarrow V'$,
$h: W\rightarrow W'$ are graded $k$-linear maps, define
\[
   (g\ot h) (v\ot w) = (-1)^{|h||v|} g(v)\ot h(w)
\]
for all homogeneous $v\in V$, $w\in W$, where $|h|$, $|v|$
denote the degrees of $h$, $v$, respectively. 

It can be shown (see, e.g.,~\cite[Section~6.4]{HCSW}) that there exists an
$R$-bimodule map $\phi: K\ot_R K\rightarrow K[1]$ for which 
\begin{equation}\label{eqn:phi-htpy}
   d\phi + \phi (d\ot 1 + 1\ot d) = \mu\ot 1 - 1\ot \mu ,
\end{equation}
where we take $K[1]_i = K_{i+1}$ for all $i$ so that
$\phi$ is a map of degree 1. 
The map $\phi$ is called a {\em contracting homotopy} for $\mu\ot 1 - 1\ot \mu$. 

Let $f\in{\rm{Hom}}_{R^e}(K_m,R)$ be a cocycle, that is $fd_{m+1}=0$. 
We can view it as a map $f:K\rightarrow R$ by setting $f|_{K_n}=0$ whenever $n\neq m$. 
Define $\psi_f : K\rightarrow K[1-m]$ by
\begin{equation}\label{eqn:psi-f}
  \psi_f = \phi (1\ot f\ot 1) \Delta^{(2)} ,
\end{equation}
where $\Delta^{(2)}:= (\Delta\ot 1)\Delta$
and $\phi$ is a contracting homotopy for $\mu\ot 1 - 1\ot\mu$
as defined above. 
In the expression~(\ref{eqn:psi-f}), 
$1\ot f\ot 1$ is a map from $K\ot_R K\ot_R K$
to $K\ot_R R\ot_R K\cong K\ot_R K$, and we have
identified these latter two isomorphic vector spaces. 
Note that $\psi_f$ is a map of degree $1-m$ on $K$ by definition.

The following theorem is from~\cite{NW} with slightly different 
hypotheses, and is
stated in this form as~\cite[Theorem 6.4.5]{HCSW}.
We will take the formula in the theorem for the Gerstenhaber bracket
as a definition for our purposes in the rest of
this paper, since our resolutions will always be counital
differential graded coalgebras. 

\begin{theo}\label{thm:bracket}
Let $K$ be a projective resolution of $R$ as an $R^e$-module
that is a counital differential graded coalgebra. 
Let $f\in {\rm{Hom}}_{R^e}(K_m,R)$ and $g\in{\rm{Hom}}_{R^e}(K_n,R)$ be cocycles.
Define $\psi_f$ as in~(\ref{eqn:psi-f}), and similarly $\psi_g$.
Then
\begin{equation}\label{eqn:formula-bracket}
   {[} f , g {]} := f\psi_g - (-1)^{(m-1)(n-1)} g \psi_f ,
\end{equation}
as a function in ${\rm{Hom}}_{R^e}(K_{m+n-1},R)$, represents the
Gerstenhaber bracket on Hochschild cohomology. 
\end{theo}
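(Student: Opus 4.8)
The plan is to treat the statement as an equivalence of two brackets on $\HH^*(R)$: the formula~\eqref{eqn:formula-bracket} computed on the resolution $K$, and the classical Gerstenhaber bracket, which is defined on the bar resolution $\mathbb{B}(R)$. Since $\HH^*(R)$ is independent of the choice of projective resolution, it suffices to show (i) that~\eqref{eqn:formula-bracket} defines a class in $\HH^{m+n-1}(R)$ depending only on the cohomology classes of $f$ and $g$, and (ii) that this class agrees with the one produced by the classical circle-product formula on $\mathbb{B}(R)$. First I would establish (i), which is the heart of the matter, and then reduce (ii) to a direct computation on the bar resolution.

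For (i), the key is a homotopy relation for $\psi_f$. Writing the differential on $K\ot_R K$ as $d\ot 1 + 1\ot d$ and using that $\Delta$, hence $\Delta^{(2)}$, is a chain map, that $f$ is a cocycle so $fd=0$ and the induced differential on $R$ vanishes, together with the contracting-homotopy identity~\eqref{eqn:phi-htpy}, one computes the graded commutator of $d$ with $\psi_f = \phi(1\ot f\ot 1)\Delta^{(2)}$. The counit axioms $(\mu\ot 1)\Delta = 1 = (1\ot\mu)\Delta$ collapse the terms $(\mu\ot 1 - 1\ot\mu)(1\ot f\ot 1)\Delta^{(2)}$ to an explicit chain-level lift of $f$, yielding a relation of the shape $d\psi_f \pm \psi_f d = (\text{lift of } f)$. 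From this I would deduce that the full expression~\eqref{eqn:formula-bracket}, though not its individual terms $f\psi_g$ and $g\psi_f$, is a cocycle whenever $f,g$ are, so it represents a cohomology class. Independence of the class from the representatives $f,g$ and from the auxiliary data $\Delta$ and $\phi$ would then follow by constructing secondary homotopies: replacing $\phi$ by another contracting homotopy alters $\psi_f$ by a term that contributes a coboundary, and any two diagonals lifting the same map $R\to R\ot_R R$ are chain homotopic, so their difference is again a coboundary.

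For (ii), I would specialize to $K = \mathbb{B}(R)$ equipped with its standard deconcatenation comultiplication $\Delta$ and the standard contracting homotopy $\phi$ built from the bar differential. In this case $(1\ot g\ot 1)\Delta^{(2)}$ inserts $g$ into the middle block of the triple deconcatenation and $\phi$ reassembles the result, so that $f\psi_g$ unwinds to Gerstenhaber's insertion operation $f\circ g$. The antisymmetrized combination in~\eqref{eqn:formula-bracket} then reproduces the classical bracket $f\circ g - (-1)^{(m-1)(n-1)} g\circ f$, with the signs matching those imposed by the Koszul sign convention fixed earlier in the excerpt. Transporting this identification back to a general $K$ uses comparison chain maps between $\mathbb{B}(R)$ and $K$ that respect the coalgebra structures up to homotopy, together with the naturality of $f\mapsto\psi_f$ established in step (i).

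The main obstacle is step (i), and within it the independence of the resulting class from the auxiliary data $\Delta$ and $\phi$: these are not canonically determined by $K$, so one must show that different choices produce cocycles differing by coboundaries. Carrying this out requires building the secondary homotopies explicitly and tracking the Koszul signs through the iterated diagonal $\Delta^{(2)}$ and the degree-$1$ map $\phi$, which is where essentially all of the bookkeeping lives; the agreement with the classical bracket in step (ii), by contrast, is a finite, if delicate, computation on the bar resolution.
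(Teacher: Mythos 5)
The first thing to note is that the paper contains no proof of this theorem: it is quoted from~\cite{NW} (in the form of~\cite[Theorem 6.4.5]{HCSW}), and the authors explicitly take formula~(\ref{eqn:formula-bracket}) as their \emph{definition} of the bracket, deferring the equivalence with the classical definition to the cited literature. So your proposal cannot match ``the paper's proof''; what it does is reconstruct, in outline, the argument of Negron--Witherspoon and its extension by Volkov~\cite{Volkov}. Much of your step (i) is sound and is indeed how~\cite{NW} proceeds: using~(\ref{eqn:phi-htpy}), the fact that $\Delta^{(2)}$ is a chain map, $fd=0$, and the counit axioms, one gets (up to Koszul signs) $d\psi_f + \psi_f d = (f\ot 1)\Delta - (1\ot f)\Delta$, and the cup-product-type terms cancel at the chain level in the antisymmetrized combination, so $[f,g]$ is a cocycle. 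One caveat even here: your independence-of-$\phi$ argument needs the difference of two contracting homotopies, a degree-$1$ cycle in $\HHom_{R^e}(K\ot_R K, K)$, to be a boundary, which requires $K\ot_R K$ to again be a projective (or at least suitably acyclic) resolution of $R$; this uses one-sided projectivity/flatness of the $K_i$, a hypothesis silently invoked both by you and, in effect, by the clean statement above.

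The genuine gap is in your step (ii), the transport from the bar resolution to a general $K$. A comparison chain map $\mathbb{B}(R)\rightleftarrows K$ does \emph{not} intertwine the two diagonals or the two contracting homotopies, even up to homotopy, in any automatic way, and there is no ``naturality of $f\mapsto\psi_f$'' to appeal to: $\psi_f=\phi(1\ot f\ot 1)\Delta^{(2)}$ depends on $\Delta$ and $\phi$ in the middle of a three-fold comultiplication, and propagating a homotopy $\Delta'\simeq(\pi\ot\pi)\Delta_{\mathbb{B}}\iota$ through this composite, while keeping track of $\phi$, is precisely the hard content of the theorem. This is why~\cite{NW} does not prove the statement for arbitrary $K$ but imposes extra hypotheses (essentially that $K$ embed in the bar resolution as a subcoalgebra with $\phi$ obtained compatibly), and why removing those hypotheses required a genuinely new argument in~\cite{Volkov}. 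Relatedly, your claim that independence from $\Delta$ on a fixed $K$ follows because ``any two diagonals are chain homotopic'' is incomplete: a homotopy between diagonals need not respect coassociativity or counitality, and you must show the resulting difference of brackets is a coboundary after inserting the homotopy into $\Delta^{(2)}$ and composing with $\phi$ and with $f,g$ --- again a Volkov-level argument, not a formality. As a roadmap your proposal points at the right literature-proof; as a proof it stops exactly where the real work begins.
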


We next explain how to obtain a map $\phi$
satisfying equation~(\ref{eqn:phi-htpy}) when $K$ is a twisted tensor
product resolution.
We may then use $\phi$ in the formula~(\ref{eqn:psi-f})
in order to compute Gerstenhaber brackets via formula~(\ref{eqn:formula-bracket}).

Let $A$ and $B$ be $k$-algebras and let $\tau: B\ot A\rightarrow A\ot B$
be a twisting map. Let 
\begin{align*}
 P_{\bu}: &\quad  \cdots \xrightarrow{d_{2}^{P}}P_{1}\xrightarrow{d_{1}^{P}}P_{0}\xrightarrow{\mu_P}A\to 0 ,\\
Q_{\bu}:  &\quad \cdots \xrightarrow{d_{2}^{Q}}Q_{1}\xrightarrow{d_{1}^{Q}}Q_{0}\xrightarrow{\mu_Q}B\to 0 
\end{align*}
be an $A^{e}$-projective resolution of $A$ and 
a $B^{e}$-projective resolution of $B$, respectively.
By Theorem~\ref{thm:ttpr}, if $P_{\bu}$ and $Q_{\bu}$ 
are compatible with $\tau$
and $P_i\ot Q_j$ is a projective $(A\ot_{\tau}B)^e$-module for all $i,j$,
then the total complex of 
$P_{\bu}\tens Q_{\bu}$ is an $(A\tens_{\tau}B)^{e}$-projective resolution of $A\tens_{\tau}B$.
We will denote this resolution by $P_{\bu}\tens_{\tau} Q_{\bu}$. 
In particular, this is the case when $P$ and $Q$ are both bar resolutions
or they are both Koszul resolutions and $\tau$ is strongly graded, 
by Theorems~\ref{thm:bar-compatible} and~\ref{thm:Koszul-compatible}. 

We will be interested in resolutions $P_{\bu},Q_{\bu}$ for which there exists
an $A\ot_{\tau}B$-bimodule chain map 
\begin{equation}\label{eqn:sigma-domain-codomain}
   \sigma: (P_{\bu}\ot_{\tau}Q_{\bu})\ot _{A\ot_{\tau}B} 
   (P_{\bu}\ot_{\tau}Q_{\bu})
   \rightarrow (P_{\bu}\ot_A P_{\bu})\ot_{\tau}(Q_{\bu}\ot_BQ_{\bu})
\end{equation}
such that 
\begin{equation}
\label{eqn:sigma-condn}
    \mu_P\ot\mu_Q\ot 1_P\ot 1_Q - 1_P\ot 1_Q\ot \mu_P\ot \mu_Q
   = (\mu_P\ot 1_P\ot \mu_Q\ot 1_Q - 1_P\ot \mu_P\ot 1_Q\ot \mu_Q)\sigma 
\end{equation}
as a map from $(P_{\bu}\ot_{\tau}Q_{\bu})\ot_{A\ot_{\tau}B} 
(P_{\bu}\ot_{\tau}Q_{\bu})$
to $P_{\bu}\ot_{\tau}Q_{\bu}$.
We will see that such a map $\sigma$ exists for bar and Koszul
resolutions in particular in Lemma~\ref{a} below.

First we define a map that will be a forerunner to $\sigma$
for these resolutions: Assume for now that $P_{\bu}$ and $Q_{\bu}$ 
are both bar resolutions.
Let $\sigma': P_{\bu}\ot Q_{\bu}\ot P_{\bu}\ot Q_{\bu} 
\rightarrow P_{\bu}\ot P_{\bu}\ot Q_{\bu}\ot Q_{\bu}$ 
be the map given in each degree as
$
\sigma'_{r,s,t,u}:P_r\otimes Q_s\otimes P_t\otimes Q_u\longrightarrow P_r\otimes P_t\otimes Q_s\otimes Q_u
$
for $r,s,t,u\in\mathbb{N}$ where
\begin{align}\label{eqn:sigma-defn}
\sigma'_{r,s,t,u} = &(-1)^{st} (1_{r+2}\otimes\tau_{B,t}\otimes 1_{s+1}\otimes 1_{u+2})\circ (1_{r+2}\otimes 1\otimes\tau_{B,t}\otimes 1_s\otimes 1_{u+2})\\
&\circ\cdots\circ(1_{r+2}\otimes 1_s\otimes\tau_{B,t}\otimes 1\otimes 1_{u+2}) \circ (1_{r+2}\otimes 1_{s+1}\otimes\tau_{B,t}\otimes 1_{u+2}).\nonumber
\end{align}
This definition says that $\sigma'$ is the map that takes the rightmost element in the tensor product $Q_s$, passes it through $P_t$ via $\tau_{B,t}$, then takes the second rightmost element and proceeds likewise, and so on, till we have passed  factors in $Q_s$ to the right side of factors in $P_t$. However, we could proceed in a symmetric way by first taking the leftmost element in the tensor product of $P_t$, passing it through $Q_s$ via $\tau_{s,A}$, then proceeding analogously as before, till we have passed all elements forming $P_t$ to the left side of $Q_s$. By properties of the twisting map $\tau$, these two constructions will be the same.

Next assume that $P_{\bu}$ and $Q_{\bu}$ are Koszul resolutions and $\tau$ is strongly
graded.
An argument similar to the proof of Theorem~\ref{thm:Koszul-compatible} shows
that $\sigma'$, defined similarly by equation~(\ref{eqn:sigma-defn}) in each
degree, is indeed a well-defined map on Koszul resolutions.

\begin{lemma}\label{a}
Let $P_{\bu},Q_{\bu}$ be bar resolutions or Koszul resolutions in case $\tau$ is strongly graded
and define $\sigma'$ as in equation~(\ref{eqn:sigma-defn}). Then $\sigma'$ induces a chain map $\sigma$
as in~(\ref{eqn:sigma-domain-codomain}) that is an isomorphism of 
$(A\tens_{\tau}B)^{e}$-modules in each degree, 
lifting the canonical isomorphism 
$$
   (A\ot_{\tau} B)\ot_{A\ot_{\tau}B} (A\ot_{\tau} B)\stackrel{\sim}{\longrightarrow}
    A\ot_{\tau} B .
$$
In particular, condition~(\ref{eqn:sigma-condn}) holds.
\end{lemma}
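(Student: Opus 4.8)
The plan is to prove that the map $\sigma'$ defined by equation~(\ref{eqn:sigma-defn}) descends to a well-defined chain map $\sigma$ on the relative tensor products, is an isomorphism in each degree, and satisfies condition~(\ref{eqn:sigma-condn}). First I would verify well-definedness: the formal $\sigma'$ is defined on the external tensor products $P_{\bu}\ot Q_{\bu}\ot P_{\bu}\ot Q_{\bu}$, but the codomain and domain of $\sigma$ in~(\ref{eqn:sigma-domain-codomain}) involve tensoring over $A\ot_{\tau}B$ on the inside and over $A$, $B$ on the two factors after reordering. I would check that $\sigma'$ respects the middle $A\ot_{\tau}B$-relations, so that it passes to a map on the relative tensor product; concretely, moving a middle element $(a\ot b)$ across the tensor symbol in $(P_{\bu}\ot_{\tau}Q_{\bu})\ot_{A\ot_{\tau}B}(P_{\bu}\ot_{\tau}Q_{\bu})$ should match, after applying the twisting maps $\tau_{B,t}$, the relation in $(P_{\bu}\ot_A P_{\bu})\ot_{\tau}(Q_{\bu}\ot_B Q_{\bu})$. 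This is exactly where the compatibility of the resolutions with $\tau$ (Theorems~\ref{thm:bar-compatible} and~\ref{thm:Koszul-compatible}) enters, guaranteeing $\tau_{B,t}(B\ot P_t)= P_t\ot B$ so the relations transport correctly.

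Next I would show $\sigma$ is a chain map. Since $\tau_{B,\bu}$ is by definition a chain map lifting $\tau_{B,M}$, and the sign $(-1)^{st}$ in~(\ref{eqn:sigma-defn}) is precisely the Koszul sign forced by our tensor-of-maps convention, the interchange of the differential on the total complex with the iterated applications of $\tau_{B,t}$ should follow from a bookkeeping argument: each differential acts on one of the four tensor factors, and commuting it past the block of $\tau_{B,t}$ maps produces exactly the signs demanded by the graded convention stated before Theorem~\ref{thm:bracket}. I would then argue bijectivity degreewise: each $\tau_{B,t}$ is bijective by hypothesis (compatibility is via \emph{bijective} $k$-linear maps), hence so is the composite defining $\sigma'_{r,s,t,u}$, and the induced map on the relative tensor products is correspondingly an isomorphism of $(A\ot_{\tau}B)^e$-modules. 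In degree zero this recovers the canonical isomorphism $(A\ot_{\tau}B)\ot_{A\ot_{\tau}B}(A\ot_{\tau}B)\stackrel{\sim}{\longrightarrow}A\ot_{\tau}B$, as $\tau_{B,0}=\tau_{B,A}=\tau$ on $B\ot A$ reproduces the twisted multiplication.

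Finally, condition~(\ref{eqn:sigma-condn}) I would derive as a formal consequence: the left-hand side is the difference of the two ways of augmenting the outer pair of factors, and applying $\sigma$ converts the ordering $P\ot Q\ot P\ot Q$ into $P\ot P\ot Q\ot Q$, after which $\mu_P\ot 1_P\ot\mu_Q\ot 1_Q-1_P\ot\mu_P\ot 1_Q\ot\mu_Q$ is exactly the augmentation in the reordered factors. The identity should then reduce to the fact that $\mu_P$, $\mu_Q$ are compatible with the twisting (equivalently, that $\sigma$ lifts the canonical isomorphism), so both sides agree after identifying $A\ot_A A\cong A$ and $B\ot_B B\cong B$.

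I expect the main obstacle to be the Koszul case of well-definedness. For bar resolutions every $P_n=A^{\ot(n+2)}$ is free and $\tau_{B,n}=\tau_n$ is the explicit iterate of $\tau$, so checking that $\sigma'$ respects the middle relations is direct bookkeeping. For Koszul resolutions, however, $P_n$ sits inside $A^{\ot(n+2)}$ as an intersection of subspaces $V^{\ot i}\ot R\ot V^{\ot(n-i-2)}$, and one must confirm that the iterated twisting $\tau_{B,t}$ genuinely preserves these subspaces and that the reordering lands in the correct intersection-defined summand $P_t\ot B$. This is precisely the subtlety handled in the proof of Theorem~\ref{thm:Koszul-compatible}, where strong gradedness of $\tau$ is used to show $\tau_n(B\ot\mathbb{K}_n(A))=\mathbb{K}_n(A)\ot B$; I would invoke that argument to ensure $\sigma'$ restricts correctly to the Koszul subcomplex, thereby reducing the Koszul case to the already-verified bar case.
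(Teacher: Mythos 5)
Your proposal follows essentially the same route as the paper's proof: descend $\sigma'$ through the canonical surjection by checking it respects the middle $A\ot_{\tau}B$-relations (the paper phrases this as $A\ot_{\tau}B$-middle linearity of the composition with the quotient map), deduce the chain-map property from compatibility of $\tau$ with the multiplications, handle the Koszul case by reducing to the bar case via the argument of Theorem~\ref{thm:Koszul-compatible} exactly as the paper does just before the lemma, and obtain invertibility from the bijectivity of the twisting maps. The one refinement worth making is in your bijectivity step: a bijection on the external tensor products does not by itself yield an isomorphism on the relative tensor products, so you should, as the paper does, define the inverse of $\sigma$ explicitly by the analogous formula built from $\tau^{-1}$ and observe that it likewise descends.
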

 
\begin{proof}
First we note that $\sigma'$, defined by 
equation~(\ref{eqn:sigma-defn}) in each degree, induces a map 
\[
   \sigma :  (P_{\bu}\ot_{\tau}Q_{\bu})\ot _{A\ot_{\tau}B} 
   (P_{\bu}\ot_{\tau}Q_{\bu})
   \rightarrow (P_{\bu}\ot_A P_{\bu})\ot_{\tau}(Q_{\bu}\ot_BQ_{\bu})
\]
as claimed: Similarly to the proof of~\cite[Lemma~4.1]{SW19}, the map
given by the composition
\[
  (P_{\bu}\ot Q_{\bu})\ot (P_{\bu}\ot Q_{\bu}) 
  \stackrel{1\ot \tau\ot 1}{\relbar\joinrel\relbar\joinrel\relbar\joinrel
\longrightarrow} P_{\bu}\ot P_{\bu}\ot Q_{\bu}\ot Q_{\bu} \longrightarrow
(P_{\bu}\ot_{A} P_{\bu})\ot (Q_{\bu}\ot_{B} Q_{\bu}),
\]
where the latter map is the canonical surjection,
is $A\ot_{\tau}B$-middle linear.
Therefore it induces a well-defined map $\sigma$ as claimed. 
Further, $\sigma$ is a chain map 
since $\tau$ is compatible with the multiplication maps
$\mu_A$, $\mu_B$, the bimodule actions are defined in terms of
multiplication, and Koszul resolutions are subcomplexes of bar resolutions.
Finally, $\sigma$ is a bijection;
an inverse map can be defined similarly using the inverse
of the twisting map $\tau$. 
\end{proof}

Other settings where a map $\sigma$ exists that satisfies condition~(\ref{eqn:sigma-condn})
are skew group algebras~\cite[Remark 4.4]{SW19} and the Jordan plane
of Section~\ref{sec:Jordan} below.
We see next that condition~(\ref{eqn:sigma-condn}) allows us to define
a contracting homotopy, generalizing the cases where the twisting
is given by a bicharacter~\cite[Lemma 3.5]{GNW} or 
by a group action resulting in a skew group algebra~\cite[Theorem 4.6]{SW19}. 
Our proof is essentially the same as in these two cases;
we provide details for completeness. 

\begin{lemm}\label{c}
Let $P_{\bu},Q_{\bu}$ 
be projective $A^e$- and $B^e$-module resolutions, respectively,
both compatible with $\tau$, for which $P_{\bu}\ot_{\tau}Q_{\bu}$ 
is a projective
$(A\ot_{\tau}B)^e$-module resolution of $A\ot_{\tau}B$. 
Let $\phi_{P}$ and $\phi_{Q}$ be contracting homotopies for 
$\mu_P\ot 1_P-1_P\ot \mu_P$ and 
$\mu_Q\ot 1_Q - 1_Q\ot \mu_Q$, respectively. 
Assume there is a chain map $\sigma$ 
as in~(\ref{eqn:sigma-domain-codomain})
satisfying condition~(\ref{eqn:sigma-condn}). 
Define $\phi=\phi_{P\tens_{\tau}Q} : 
(P_{\bu}\tens_{\tau}Q_{\bu})\tens_{A\tens_{\tau}B}
(P_{\bu}\tens_{\tau}Q_{\bu})\to (P_{\bu}\tens_{\tau}Q_{\bu})$ by
\begin{equation}\label{3}
 \phi:=(\phi_{P}\tens\mu_Q\ot 1_Q+1_P\ot \mu_P\tens\ \phi_{Q})\sigma .
\end{equation}
Then $\phi$ is a contracting homotopy for 
$\mu_P\ot \mu_Q\ot 1_P\ot 1_Q - 1_P\ot 1_Q\ot \mu_P\ot \mu_Q$. 
\end{lemm}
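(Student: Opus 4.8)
The goal is to verify equation~(\ref{eqn:phi-htpy}) for the map $\phi$ defined in~(\ref{3}); that is, writing $d$ for the total differential of $P_{\bu}\tens_{\tau}Q_{\bu}$ and $d\ot 1 + 1\ot d$ for the induced differential on the domain of $\phi$, the plan is to show
\[
 d\phi + \phi(d\ot 1 + 1\ot d)
 = \mu_P\ot\mu_Q\ot 1_P\ot 1_Q - 1_P\ot 1_Q\ot\mu_P\ot\mu_Q .
\]
First I would move the differential across $\sigma$. Let $D'$ denote the total differential on the codomain $(P_{\bu}\ot_A P_{\bu})\ot_{\tau}(Q_{\bu}\ot_B Q_{\bu})$ of $\sigma$. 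Since $\sigma$ is a chain map by hypothesis, $\sigma(d\ot 1 + 1\ot d) = D'\sigma$, so setting $\eta := \phi_P\ot\mu_Q\ot 1_Q + 1_P\ot\mu_P\ot\phi_Q$ (so that $\phi = \eta\sigma$) the left-hand side becomes $(d\eta + \eta D')\sigma$. Everything therefore reduces to computing the graded commutator $d\eta + \eta D'$ and then composing with $\sigma$.

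To compute this commutator I would treat $\eta$ as a sum of two external tensor products of maps and apply the graded Leibniz rule to each summand. For $\phi_P\ot(\mu_Q\ot 1_Q)$, the key observation is that $\mu_Q\ot 1_Q\colon Q_{\bu}\ot_B Q_{\bu}\to Q_{\bu}$ is a chain map: this uses only $\mu_Q d^{Q}_1 = 0$ together with the vanishing of $\mu_Q$ on $Q_i$ for $i\ge 1$. Hence only the $\phi_P$ factor contributes, and the contracting-homotopy identity $d_P\phi_P + \phi_P(d_P\ot 1 + 1\ot d_P) = \mu_P\ot 1_P - 1_P\ot\mu_P$ converts this summand into $(\mu_P\ot 1_P - 1_P\ot\mu_P)\ot(\mu_Q\ot 1_Q)$. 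Symmetrically, in $(1_P\ot\mu_P)\ot\phi_Q$ the map $1_P\ot\mu_P$ is a chain map and the homotopy identity for $\phi_Q$ contributes, producing $(1_P\ot\mu_P)\ot(\mu_Q\ot 1_Q - 1_Q\ot\mu_Q)$. Adding the two, the cross terms $(1_P\ot\mu_P)\ot(\mu_Q\ot 1_Q)$ cancel and leave $(\mu_P\ot 1_P)\ot(\mu_Q\ot 1_Q) - (1_P\ot\mu_P)\ot(1_Q\ot\mu_Q)$, which in the four slots of $(P_{\bu}\ot_A P_{\bu})\ot_{\tau}(Q_{\bu}\ot_B Q_{\bu})$ is exactly $\mu_P\ot 1_P\ot\mu_Q\ot 1_Q - 1_P\ot\mu_P\ot 1_Q\ot\mu_Q$. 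Composing with $\sigma$ and invoking condition~(\ref{eqn:sigma-condn}) yields the desired right-hand side, finishing the argument.

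The step I expect to demand the most care is the bookkeeping of Koszul signs. Both $d$ and $D'$ split into summands of the shape $d_P\ot 1 \pm 1\ot d_Q$, and each external tensor product of maps carries the sign $(-1)^{|h||v|}$ fixed by the convention stated before Theorem~\ref{thm:bracket}; I would keep $\eta$ written as a genuine tensor product of maps throughout so that the graded Leibniz rule applies unambiguously. The one point at which the argument could go wrong is the cancellation of the two $(1_P\ot\mu_P)\ot(\mu_Q\ot 1_Q)$ terms, which requires their signs to be opposite; this is precisely where the sign chosen in the definition~(\ref{3}) of $\phi$ must be compatible with the total-complex signs. As our situation is the direct analogue of the computations in~\cite[Lemma 3.5]{GNW} and~\cite[Theorem 4.6]{SW19}, I expect the signs to conspire exactly as required.
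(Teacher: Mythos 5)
Your proposal is correct and follows essentially the same route as the paper's own proof: expand $d\phi+\phi(d\ot 1+1\ot d)$ via the graded Leibniz rule using that $\sigma$, $\mu_Q\ot 1_Q$, and $1_P\ot\mu_P$ are chain maps, apply the homotopy identities~(\ref{eqn:phi-htpy}) for $\phi_P$ and $\phi_Q$, cancel the cross terms $(1_P\ot\mu_P)\ot(\mu_Q\ot 1_Q)$, and invoke condition~(\ref{eqn:sigma-condn}). The paper merely packages the same computation in the shorthand $F_P^l, F_P^r, F_Q^l, F_Q^r$, with the Koszul signs absorbed into the stated tensor-product convention exactly as you anticipate.
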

\begin{proof}
To shorten notation, as in~\cite{NW}, we define chain maps $F_{P}^{l}, F_{P}^{r}, F_{P} : P\tens_{A}P \to P$ by
\[
 F_{P}^{l}=\mu_P\tens\ 1_{P},
\quad
 F_{P}^{r}=1_{P}\tens\ \mu_P,
\quad
\text{ and }\quad
F_{P}=F_{P}^{l}-F_{P}^{r},
\]
where $1_{P}$ is the identity map on $P$. We define $F_{Q}^{l}, F_{Q}^{r}$,and $F_{Q}$ similarly. As $F_{P}^{l}, F_{Q}^{l}, F_{P}^{r}, F_{Q}^{r}$, and $\sigma$ are chain maps, they commute with the differentials, so for example
$ d_P F^r_P = F^r_P d_{P\ot_A P}$, and similar equations hold for the other maps. 
Therefore by the above formulas, notation, and the definition of the differential on a
tensor product of complexes,
\begin{align*}
 d\phi+\phi d
 =&(d\tens  1 + 1\tens d)(\phi_{P}\tens F_{Q}^l+F_{P}^r\tens \phi_{Q})\sigma \\
 &+(\phi_{P}\tens F_{Q}^l+F_{P}^r\tens \phi_{Q})\sigma(d\tens 1 + 1\tens d) \\
 =&\big(d\phi_{P}\tens F_{Q}^l+dF_{P}^r\tens \phi_{Q}+\phi_{P}\tens dF_{Q}^l+F_{P}^r\tens d\phi_{Q} \\
 &+\phi_{P}d\tens F_{Q}^l+\phi_{P}\tens F_Q^ld+F_P^rd\tens \phi_Q + F_P^r\tens \phi_Qd\big)\sigma\\
 =&((d\phi_P+\phi_Pd)\tens\ F_Q^l+F_P^r\tens\ (d\phi_Q+\phi_Qd))\sigma . 
\end{align*}
Applying (\ref{eqn:phi-htpy})
 to rewrite $d\phi_P+\phi_Pd$ and $d\phi_Q+\phi_Qd$, this
expression is equal to 
\begin{align*}
 (F_P\tens\ F_Q^l+F_P^r\tens\ F_Q)\sigma 
  &=((F_P^l-F_P^r)\tens\ F_Q^l+F_P^r\tens\ (F_Q^l-F_Q^r))\sigma \\
 &=(F_P^l\tens\ F_Q^l-F_P^r\tens\ F_Q^r)\sigma .
\end{align*}
By condition~(\ref{eqn:sigma-condn}), 
this is equal to $\mu_P\ot\mu_Q\ot 1_P\ot 1_Q
- 1_P\ot 1_Q\ot\mu_P\ot \mu_Q$,
as desired.
\end{proof}

As a consequence of Lemma~\ref{c},
we may now define a contracting homotopy $\phi_{P\ot_{\tau}Q}$ from 
knowledge of $\phi_P$, $\phi_Q$ provided there is a chain map $\sigma$
satisfying condition~(\ref{eqn:sigma-condn}).
We may then compute Gerstenhaber brackets
from these maps. We have thus proven:

\begin{theo}
\label{mainthm}
Let $A$ and $B$ be $k$-algebras and let $P_{\bu}$ and $Q_{\bu}$ be a projective
$A^e$- and $B^e$-module resolution of $A$ and $B$, respectively.
Assume that $P_{\bu}\ot_{\tau} Q_{\bu}$ is a projective
$(A\ot_{\tau} B)^e$-module resolution of $A\ot_{\tau}B$ that
is a counital differential graded coalgebra and that there is a
chain map $\sigma$ satisfying condition~(\ref{eqn:sigma-condn}). 
Then Gerstenhaber brackets of Hochschild cocycles on 
$P_{\bu}\ot_{\tau}Q_{\bu}$
are given by formula~(\ref{eqn:formula-bracket}) via
formulas~(\ref{eqn:psi-f}) and~(\ref{3}). 
\end{theo}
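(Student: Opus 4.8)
The plan is to recognize Theorem~\ref{mainthm} as the combination of Lemma~\ref{c} with Theorem~\ref{thm:bracket}, applied to the algebra $R := A\ot_{\tau}B$ and the resolution $K := P_{\bu}\ot_{\tau}Q_{\bu}$. The only role of Theorem~\ref{thm:bracket} is to turn a contracting homotopy $\phi$ for $\mu\ot 1 - 1\ot\mu$ on $K$ into the bracket formula~(\ref{eqn:formula-bracket}) via~(\ref{eqn:psi-f}); so the strategy is to manufacture such a $\phi$ from the standing hypotheses and then quote that theorem verbatim. By assumption $K$ is a projective $(A\ot_{\tau}B)^e$-module resolution of $A\ot_{\tau}B$ that is a counital differential graded coalgebra, so the hypotheses of Theorem~\ref{thm:bracket} concerning $K$ itself are already in place, and only the contracting homotopy remains to be supplied.

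First I would note that, since $P_{\bu}$ and $Q_{\bu}$ are projective resolutions, the general discussion preceding Theorem~\ref{thm:bracket} furnishes contracting homotopies $\phi_P$ and $\phi_Q$ for $\mu_P\ot 1_P - 1_P\ot\mu_P$ and $\mu_Q\ot 1_Q - 1_Q\ot\mu_Q$. Then I would apply Lemma~\ref{c}, all of whose hypotheses are exactly the standing assumptions here: compatibility of $P_{\bu},Q_{\bu}$ with $\tau$, which is implicit in the very notation $P_{\bu}\ot_{\tau}Q_{\bu}$; projectivity of $K$; and a chain map $\sigma$ satisfying~(\ref{eqn:sigma-condn}). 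Lemma~\ref{c} then produces the map $\phi$ of~(\ref{3}) as a contracting homotopy for $\mu_P\ot\mu_Q\ot 1_P\ot 1_Q - 1_P\ot 1_Q\ot\mu_P\ot\mu_Q$.

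The one point requiring verification is the identification of this last expression with the map $\mu\ot 1 - 1\ot\mu$ on $K\ot_{R}K$ demanded by Theorem~\ref{thm:bracket}. This is a matter of unwinding definitions: the augmentation of $K=P_{\bu}\ot_{\tau}Q_{\bu}$ is $\mu=\mu_P\ot\mu_Q$ in degree~$0$ and the identity on $K$ is $1_K=1_P\ot 1_Q$, so that after matching tensor factors across the canonical identification $K\ot_{R}K\cong(P_{\bu}\ot_{\tau}Q_{\bu})\ot_{A\ot_{\tau}B}(P_{\bu}\ot_{\tau}Q_{\bu})$ one reads off $\mu\ot 1_K=\mu_P\ot\mu_Q\ot 1_P\ot 1_Q$ and $1_K\ot\mu=1_P\ot 1_Q\ot\mu_P\ot\mu_Q$. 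With this identification, $\phi$ is a contracting homotopy for $\mu\ot 1 - 1\ot\mu$ on $K$, so $K$ satisfies every hypothesis of Theorem~\ref{thm:bracket}; applying that theorem yields the bracket formula~(\ref{eqn:formula-bracket}) with each $\psi_f$ built from $\phi$ via~(\ref{eqn:psi-f}). Since the substantive homological work was already carried out in Lemma~\ref{c}, I do not expect a genuine obstacle here; the only care needed is the bookkeeping of tensor factors and signs in the identification above, which is routine.
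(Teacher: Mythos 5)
Your proposal is correct and follows essentially the same route as the paper, which proves Theorem~\ref{mainthm} precisely by invoking Lemma~\ref{c} to produce the contracting homotopy $\phi$ of formula~(\ref{3}) and then applying Theorem~\ref{thm:bracket} to $K = P_{\bu}\ot_{\tau}Q_{\bu}$. Your explicit identification of $\mu\ot 1 - 1\ot\mu$ on $K\ot_{A\ot_{\tau}B}K$ with $\mu_P\ot\mu_Q\ot 1_P\ot 1_Q - 1_P\ot 1_Q\ot\mu_P\ot\mu_Q$ is correct and merely spells out bookkeeping the paper leaves implicit.
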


By Lemma~\ref{a}, the theorem applies 
whenever $P_{\bu}$, $Q_{\bu}$ are bar resolutions,
or when $P_{\bu}$, $Q_{\bu}$ are Koszul resolutions and $\tau$ is strongly graded.
In the next section, we give an example of yet another setting 
in which a suitable chain map $\sigma$ may be defined, and so
the theorem applies.
As already mentioned, the theorem applies more generally to 
twisted tensor products where the twisting is given by a
bicharacter on grading groups~\cite[Section 3]{GNW} and 
to skew group algebras~\cite[Theorem 4.9]{SW19}.

\section{Jordan plane}\label{sec:Jordan}

In this section, we will illustrate the twisted tensor product techniques of Sections~\ref{sec:twisted} and~\ref{sec:Gbracket-techniques} with a small example. 
Let $A = k[x]$ and $B=k[y]$.
Let $\tau: B\ot A \rightarrow A\ot B$ be defined by
\[
\tau(y\ot x) = x\ot y + x^2\ot 1 ,
\]
extended to $B\ot A$ by requiring $\tau$ to be a twisting map.
Its inverse is given by
\[
   \tau^{-1} (x\ot y) = y\ot x - 1\ot x^2.
\]
There is an isomorphism
\[
  A\ot_{\tau} B\cong k\langle x,y\rangle / (yx - xy - x^2) .
\]
Accordingly, we will write elements of the ring $A\ot_{\tau}B$ as
(noncommutative) polynomials in indeterminates $x$ and $y$,
omitting the tensor product symbol in notation for these elements.
We will identify the algebras $A$ and $B$ with subalgebras
of $k\langle x,y\rangle/ (yx-xy-x^2)$ in this way. 

We call $A\ot_{\tau} B$ the {\em Jordan plane}.
Its Hochschild cohomology and in particular the
Gerstenhaber algebra structure was computed by
Lopes and Solotar~\cite{LS} as one example in a larger class of algebras,
using completely different techniques than ours:
Their resolution was constructed using a method of Chouhy and
Solotar~\cite{CS}, analogous to the Anick resolution,
and their Gerstenhaber brackets were computed using the derivation
operator method of Su\'arez-\'Alvarez~\cite{SA} that applies
specifically to brackets with Hochschild 1-cocycles (that is, derivations). 
See also~\cite[Theorem 4.6]{Sh} for derivations
on the Jordan plane and their Gerstenhaber brackets, and~\cite{BLO}
for a generalization. 
Our approach is based on techniques from
Section~\ref{sec:Gbracket-techniques} and~\cite{NW,RTTP}.  
We begin by  explicitly constructing
a twisted tensor product resolution $P_{\bu}\ot_{\tau}Q_{\bu}$ 
for $A\ot_{\tau}B$. 

Note that $A$ and $B$ are Koszul algebras and $\tau$ is graded 
but not strongly graded. 
Thus Lemma~\ref{a} does not apply. 
Nonetheless we will construct a twisted tensor product resolution with a diagonal map $\Delta$ 
and a map $\sigma$ to which Theorem~\ref{mainthm} applies.
We will then be able to 
compute Gerstenhaber brackets on the Hochschild cohomology ring $\HH^*(A\ot_{\tau}B)$ as stated in Theorem~\ref{mainthm}.

Let $P_{\bu},Q_{\bu}$ be the Koszul resolutions for $A,B$, e.g.~$P_{\bu}$ is given by 
\begin{equation*}
\xymatrix{P_{\bu} : &0 \ar[r] & A\otimes A \ar[rrr]^-{\tilde{d_1}=(x\otimes1-1\otimes x).} &&& A\otimes A \ar[r]^-{m_A} & A \ar[r] & 0 , }
\end{equation*}
where we have identified $A\ot V\ot A$ with $A\ot A$
via the canonical isomorphism ($V$
is a one-dimensional vector space), and similarly $Q_{\bu}$.
We use the notation for twisting maps as before
on Koszul resolutions, under this identification.  
We will explicitly construct the twisted tensor product resolution 
$P_{\bu}\ot_{\tau}Q_{\bu}$ for $A\ot_{\tau} B$.

We will choose some notation to help keep track of homological degrees
of the free modules in the resolutions $P_{\bu},Q_{\bu}$:
For $i\in\{0,1\}$, let $e_i=1\ot 1$, a free generator of $P_i$ as an $A^e$-module. 
Similarly let $e'_i=1\ot 1$, a free generator of $Q_i$ as a $B^e$-module.
Calculations show that 
maps $\tau_{0,A}$ and $\tau_{1,A}$ may be given by 
\[
\tau_{0,A}=(\tau\otimes 1_B) (1_B\otimes \tau), \ \ \ 
\tau_{1,A}(e_1'\ot x) = x\ot e_1' .
\]
We also have
\[  
   \tau^{-1}_{1,A}(x\otimes e_1')=e_1'\otimes x . 
\]
Since $A$ is a free algebra on the generator $x$, $B\ot B$ is
a free $B^e$-module on the generator $1\ot 1$, and $\tau$ is a
twisting map, the above value of $\tau_{1,A}$ on $e_1'\ot x$ is sufficient
to determine all other values, and similarly for $\tau^{-1}_{1,A}$. 
Maps $\tau_{B,0}$ and $\tau_{B,1}$ are given by 
\begin{align*}
\tau_{B,0}&=(1_A\otimes \tau)(\tau\otimes 1_A) ,\\
\tau_{B,1}(y\ot e_1) &= e_1\ot y + xe_1\ot 1 + e_1x\ot 1 .
\end{align*}
We also have
\[
   \tau^{-1}_{B,1}(e_1\otimes y)=y\otimes e_1-1\otimes xe_1-1\otimes e_1x.
\]
The total complex $P_{\bu}\otimes_{\tau} Q_{\bu}$ is
\begin{equation}\label{total}
\xymatrix{
0 \ar[r] & P_1 \otimes Q_1 \ar[r]^-{d_2} & (P_1 \otimes Q_0)\oplus (P_0 \otimes Q_1) \ar[r]^-{d_1} &  P_0 \otimes Q_0 \ar[r] & 0}
\end{equation}
with differentials as follows.
Note first that each $(A \ot_{\tau} B)^e$-module $P_i\ot Q_j$ is 
a free $(A\ot_{\tau}B)^e$-module of rank one.
A calculation shows that
\[
    P_0\ot Q_0 := A\ot A\ot B\ot B \stackrel{1\ot \tau^{-1}\ot 1}
      {\relbar\joinrel\relbar\joinrel\relbar\joinrel\relbar\joinrel\relbar\joinrel\longrightarrow}
   (A\ot_{\tau}B) \ot (A\ot_{\tau}B)^{op}
\]
is an isomorphism of $(A\ot_{\tau}B)^e$-modules. For $(i,j)\in \{(1,1),(1,0),(0,1)\}$, 
we now describe an isomorphism
\[
\varphi : (A\ot_{\tau} B)\ot (A\ot_{\tau} B)^{op} \rightarrow P_i\ot Q_j .
\]
The free $(A\ot_{\tau} B)^e$-module
$(A\ot_{\tau} B )\ot (A\ot_{\tau} B)^{op}$ is of rank one with
free basis element $(1\ot 1)\ot (1\ot 1)$. Define 
$\varphi ( (1\ot 1)\ot (1\ot 1)) \coloneqq e_i\ot e_j'$ in $P_i\ot Q_j$ 
and let $\varphi$ be the $(A\ot_{\tau}B)^e$-module homomorphism 
generated by this choice. It satisfies 
\[
\varphi((x^a\ot y^b) \ot (x^{a'}\ot y^{b'})) \coloneqq 
(x^a\ot y^b)(e_i\ot e_j')(x^{a'}\ot y^{b'}) = 
(x^a\ot y^b)(e_ix^{a'} \ot e_j' y^{b'}).
\]
This equals $x^a e_ix^{a'} \ot y^b e_j' y^{b'} + S$, where
$S$ is a sum of terms whose degree in $y$ is less than $b + b'$ and whose 
total degree is still $a + a' + b + b'$. 
By induction on the degree in $y$, the top term
$x^ae_ix^{a'} \ot y^b e_j'y^{b'}$ of this expression is also in the image
of the map $\varphi$. Therefore $\varphi$ is surjective. 
By construction, $\varphi$ can be expressed via an upper 
triangular matrix in each total polynomial degree, so it is also injective. 
We apply the standard formula
for the differential on a tensor product of complexes as well as
the maps $\tau_{i,A}^{-1}$, $\tau_{B,i}^{-1}$ given above in order to express
elements in the resolution in terms of the free basis elements
$e_0\ot e_0'$, $e_0\ot e_1'$, $e_1\ot e_0'$,
$e_1\ot e_1'$. We calculate as follows (recall that
we write elements of the ring $A\ot_{\tau}B$ as noncommutative
polynomials in $x,y$, omitting the tensor symbol
in notation for these elements):
\begin{align*}
d_1(e_1\otimes e'_0)&=\tilde{d_1}(e_1)\otimes e'_0\\
&=xe_0\otimes e'_0 - e_0x\otimes e'_0=xe_0\otimes e'_0-e_0\otimes e'_0x \\
&=(x\otimes 1-1\otimes x)(e_0\otimes e'_0),\\
d_1(e_0\otimes e'_1)&=e_0\otimes \tilde{d_1}(e'_1)\\
&=e_0\otimes ye'_0 - e_0\otimes e'_0y=ye_0\otimes e'_0-e_0\otimes e'_0y\\
&=(y\otimes 1-1\otimes y)(e_0\otimes e'_0),\\
d_2(e_1\otimes e'_1)&=\tilde{d_1}(e_1)\otimes e'_1-e_1\otimes \tilde{d_1}(e'_1)\\
&=xe_0\otimes e'_1 - e_0x\otimes e'_1-e_1\otimes ye'_0 + e_1\otimes e'_0y \\
&=xe_0\otimes e'_1-e_0\otimes e'_1x-ye_1\otimes e'_0+xe_1\otimes e'_0+e_1x\otimes e'_0+e_1\otimes e'_0y\\
&=xe_0\otimes e'_1-e_0\otimes e'_1x-ye_1\otimes e'_0+xe_1\otimes e'_0+e_1\otimes e'_0x+e_1\otimes e'_0y\\
&=(x\otimes 1-1\otimes x)(e_0\otimes e'_1)+(x\otimes 1+1\otimes x-y\otimes 1+1\otimes y)(e_1\otimes e'_0).
\end{align*}

We next find expressions for Hochschild cocycles on which to use
the techniques of Section~\ref{sec:Gbracket-techniques}:
Apply $\text{Hom}_{(A\otimes_{\tau}B)^e}(-,A\otimes_{\tau}B)$ 
to sequence~(\ref{total}).
Since each $P_i\ot Q_j$ is isomorphic to the free
$(A\otimes_{\tau}B)^e$-module $(A\otimes_{\tau}B)^e$, we find that
\[
  {\rm{Hom}}_{(A\otimes_{\tau}B)^e} (P_i\ot Q_j, A\ot_{\tau}B)
  \cong A\ot_{\tau} B
\]
for each $i,j$, and thus the resulting complex becomes 
\begin{equation}\label{hom total}
\xymatrix{
 0 \ar[r]& A\otimes_{\tau}B \ar[r]^-{d^{*}_1}& 
 (A\otimes_{\tau}B)\oplus (A\otimes_{\tau}B) \ar[r]^-{d^{*}_2}& 
 A\otimes_{\tau}B\ar[r]& 0 . }
\end{equation}
By definition, we have
\begin{equation}\label{HHJ}
\HH^*(A\otimes_{\tau}B)=\Ker(d^{*}_1)\ \oplus \ 
\Ker(d^{*}_2)/\Img(d^{*}_1)\ \oplus \  (A\otimes_{\tau}B)/\Img(d^{*}_2) .
\end{equation}

From now on, we assume that the characteristic of $k$ is 0.
The cohomology in positive characteristic can also be found from
this complex, but it will be different. 

By~\cite[Theorem 2.2]{Sh}, since char$(k)=0$, 
the center of $A\ot_{\tau} B$ is $Z(A\ot_{\tau}B) \cong k$.
Therefore $\HH^0(A\ot_{\tau}B)\cong k$, which is precisely $\Ker (d_1^*)$. 

Now we describe $\HH^1(A\ot_{\tau}B)$. 
Recall that $\HH^1(A\ot_{\tau}B)$ is isomorphic to the space of
derivations of $A\otimes_{\tau} B$ modulo inner derivations, called outer derivations. The following theorem, which is written in a slightly different way and proven in~\cite[Theorem 4.6]{Sh}, directly provides outer derivations of $A\ot_{\tau}B$. 
In our notation, 
viewing a derivation $\partial$ as a Hochschild 1-cocycle on our resolution
(\ref{total}), it will take $e_0\ot e_1'$ to $\partial(y)$
and $e_1\ot e_0'$ to $\partial (x)$. 
See also~\cite{BLO} or~\cite{LS} for a more general setting. 

\begin{theo}\label{theo:bracket on degree 1}
	If char$(k)=0$, then each derivation $\partial$ of $A\ot_{\tau}B$ can
	be represented in the form $\partial(y)=\alpha x +p +\text{ad }w (y)$, $\partial(x)=p'x+\text{ad } w(x)$, where $\alpha \in k$, $p \in k[y]$, $p'$ is the derivative of $p$ with respect to $y$ in the usual sense, $w \in A\ot_{\tau}B$, and  $\text{ad }w(\lambda)=w\lambda-\lambda w \text{ for } \lambda \in A\ot_{\tau}B$.
\end{theo}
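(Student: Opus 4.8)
The plan is to compute $\HH^1(A\ot_\tau B)$ directly from the cohomology description in~\eqref{HHJ}, namely $\HH^1 = \Ker(d_2^*)/\Img(d_1^*)$, and to exhibit the asserted normal form by elementary linear algebra over the ring $A\ot_\tau B = k\langle x,y\rangle/(yx-xy-x^2)$. A derivation $\partial$ corresponds to a $1$-cocycle, which under the identifications in~\eqref{hom total} is a pair $(\partial(x),\partial(y)) \in (A\ot_\tau B)^{\oplus 2}$, where the correspondence sends $e_1\ot e_0' \mapsto \partial(x)$ and $e_0\ot e_1'\mapsto \partial(y)$, as noted in the paragraph preceding the theorem. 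The first task is to dualize the differential $d_2$ computed explicitly above to obtain the cocycle condition $d_2^* = 0$ as an equation in the two polynomials $\partial(x)$, $\partial(y)$; reading off the formula for $d_2(e_1\ot e_1')$, the condition is that $(x\ot 1 - 1\ot x)$ applied to $\partial(y)$ equals $(x\ot 1 + 1\ot x - y\ot 1 + 1\ot y)$ applied to $\partial(x)$, i.e.\ $x\,\partial(y) - \partial(y)\,x = x\,\partial(x) + \partial(x)\,x - y\,\partial(x) + \partial(x)\,y$ in $A\ot_\tau B$.

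Next I would solve this commutator-type equation inside the Jordan plane. The standard monomial basis $\{x^i y^j\}$ and the single relation $yx = xy + x^2$ let one compute left and right multiplication by $x$ and $y$ on monomials explicitly. Writing $\partial(x)$ and $\partial(y)$ as $k$-linear combinations of basis monomials and expanding both sides of the cocycle equation, I would collect coefficients and determine which pairs $(\partial(x),\partial(y))$ can occur. The goal is to show that modulo the image of $d_1^*$ — the inner derivations $\mathrm{ad}\,w$, which under the dualization contribute exactly the pairs $((x\ot 1-1\ot x)\!\cdot\! w,\ (y\ot 1 - 1\ot y)\!\cdot\! w) = (xw-wx,\ yw-wy)$ for $w \in A\ot_\tau B$ — every cocycle is equivalent to one of the displayed form $\partial(y) = \alpha x + p$, $\partial(x) = p'x$ with $\alpha\in k$, $p\in k[y]$ and $p' = dp/dy$. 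Concretely, I would use the freedom in $w$ to normalize away all terms except those claimed, checking that the residual constraint linking $\partial(x)$ to $\partial(y)$ forces $\partial(x)$ to be $x$ times the $y$-derivative of the $k[y]$-part of $\partial(y)$.

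The cleanest route is probably to verify two separate things. First, a \emph{spanning} statement: each explicit representative $\partial(y) = \alpha x + p$, $\partial(x) = p'x$ genuinely satisfies the cocycle equation, which is a direct substitution using $yx - xy = x^2$ together with the Leibniz-type identity $[\,\cdot\,, x]$ acting on powers of $y$; here one recognizes that conjugation by the "Euler-like" element governs why the derivative $p'$ appears. Second, an \emph{exhaustiveness/injectivity} statement: that these representatives, together with the inner derivations, account for all of $\Ker(d_2^*)$ and that distinct choices of $(\alpha, p)$ give distinct outer derivations. Since the theorem is attributed to~\cite[Theorem 4.6]{Sh}, I would be content to reduce the claim to that reference by matching notations, but to make the section self-contained I would carry out the substitution verifying the cocycle condition and the inner-derivation reduction explicitly.

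The main obstacle I anticipate is the exhaustiveness direction: solving the commutator equation $x\,\partial(y) - \partial(y)\,x = x\,\partial(x)+\partial(x)\,x - y\,\partial(x)+\partial(x)\,y$ over a noncommutative ring and proving that \emph{every} solution reduces, modulo $\mathrm{ad}\,w$, to the two-parameter family indexed by $(\alpha,p)$. Because the relation $yx = xy + x^2$ makes left and right multiplication by $x$ genuinely noncommutative, the bookkeeping of monomial coefficients is delicate, and isolating exactly which terms can be killed by an inner derivation requires understanding the image of the map $w \mapsto (xw-wx,\ yw-wy)$ precisely. The spanning direction and the substitution check are routine by comparison; the structural reduction of an arbitrary cocycle is where the real work lies, and appealing to~\cite{Sh} for that reduction is the pragmatic choice.
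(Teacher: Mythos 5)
The first thing to note is that the paper does not prove this theorem at all: it is imported (in slightly different notation) from Shirikov, \cite[Theorem 4.6]{Sh}, as the sentence preceding the statement says. So your closing decision to appeal to \cite{Sh} for the hard exhaustiveness step is not a concession but is in fact the paper's entire proof; the scaffolding you build around it --- identifying a derivation with a $1$-cocycle via $e_1\ot e_0'\mapsto \partial(x)$, $e_0\ot e_1'\mapsto\partial(y)$, and the inner derivations with the image of $w\mapsto(xw-wx,\,yw-wy)$ under $d_1^*$ --- matches the paper's surrounding discussion, and if carried out it would make the section self-contained where the paper is not.

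However, the one computation you do commit to contains a sign error that would derail your planned substitution check. The cocycle condition is $f(d_2(e_1\ot e_1'))=0$, i.e.\ the \emph{sum} of the two terms read off from $d_2$ vanishes, so
\[
x\,\partial(y)-\partial(y)\,x \;=\; y\,\partial(x)-\partial(x)\,y-x\,\partial(x)-\partial(x)\,x,
\]
the \emph{negative} of the right-hand side you wrote; equivalently this is just $\partial(yx-xy-x^2)=0$, as it must be. The Euler derivation $\partial(y)=y$, $\partial(x)=x$ (the case $p=y$, $\alpha=0$, $w=0$) shows the discrepancy is fatal rather than cosmetic: the left side is $xy-yx=-x^2$ and the correct right side is $yx-xy-2x^2=-x^2$, but your right side evaluates to $+x^2$, so with your sign no claimed representative with $p'\neq 0$ is a cocycle. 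With the correct sign your spanning check does go through cleanly: from $yx=x(y+x)$ one proves by induction the exact identity $p(y)=p(y+x)-x\,p'(y+x)$ for all $p\in k[y]$, and substituting $\partial(y)=\alpha x+p$, $\partial(x)=p'x$ reduces both sides of the cocycle condition to $-x^2p'(y+x)$; this identity is also the precise content of your ``Euler-like'' heuristic for why the derivative appears. The remaining gap --- that modulo inner derivations every cocycle lies in this two-parameter family --- is real and nontrivial, but since the paper supplies no argument for it beyond citing \cite[Theorem 4.6]{Sh}, your proposal, once the sign is fixed, is exactly as complete as the paper's treatment.
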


As a consequence of the theorem, $\HH^1(A\ot_{\tau}B)\cong k\oplus k[y]$,
which can also be shown directly from complex~(\ref{hom total}).

Lastly, we describe $\HH^2(A\ot_{\tau}B)$.
Calculations show that the image of $d_2^*$ in $A\ot_{\tau} B$
is the ideal generated by $x$, so 
\[
\HH^2(A\ot_{\tau}B)=(A\ot_{\tau}B)/\Img(d^*_2)\cong k[y] .
\] 
Finally, (\ref{HHJ}) becomes
\begin{align*}
\HH^*(A\otimes_{\tau}B)& \cong k\ \oplus \  (k\oplus k[y]) \ \oplus \ k[y] ,
\end{align*}
where the first copy of $k$ is in homological degree~0, the middle
two summands $k\oplus k[y]$ are in degree~1, and the last copy of $k[y]$
is in degree~2. (Cf.\ \cite[Corollary 3.11]{LS}.) 

We will next find a diagonal map 
$\Delta: P_{\bu}\ot_{\tau}Q_{\bu}\rightarrow 
(P_{\bu}\ot_{\tau}Q_{\bu}) \ot_{A\ot_{\tau}B} (P_{\bu}\ot_{\tau} Q_{\bu})$ 
for use in computing Gerstenhaber brackets
on $\HH^*(A\ot_{\tau}B)$.
In order to do this, we will need to consider
the total  complex $Q_{\bu}\otimes_{\tau^{-1}} P_{\bu}$ and the differentials $\hat{d}_1, \hat{d}_2$:
\begin{equation*}
\xymatrix{Q_{\bu}\otimes_{\tau^{-1}} P_{\bu} : &&&&\\
 0 \ar[r] & Q_1 \otimes P_1 \ar[r]^-{\hat{d}_2} & 
   (Q_1 \otimes P_0)\oplus (Q_0 \otimes P_1) \ar[r]^-{\hat{d}_1} 
  &  Q_0 \otimes P_0 \ar[r] & 0 }
\end{equation*}
(Recall that $A\ot_{\tau}B\cong B\ot_{\tau^{-1}}A$ as algebras,
and we define $Q_{\bu}\ot_{\tau^{-1}}P_{\bu}$ as a projective 
resolution of $(B\ot_{\tau^{-1}}A)^e$-modules, 
equivalently of $(A\ot_{\tau}B)^e$-modules,
via the techniques of Section~\ref{sec:twisted}.)
We find the differentials $\hat{d}_2$, $\hat{d}_1$ in the same way as we found the differentials for
$P_{\bu}\ot_{\tau}Q_{\bu}$:
\begin{align*}
 \hat{d}_1(e'_1\otimes e_0) &= ye'_0\otimes e_0-e'_0\otimes e_0y ,\\
 \hat{d}_1(e'_0\otimes e_1) &= xe'_0\otimes e_0-e'_0\otimes e_0x ,\\
 \hat{d}_2(e'_1\otimes e_1) &= (1\otimes x-x\otimes 1)
 (e'_1\otimes e_0)+(y\otimes 1-1\otimes y - x\otimes 1 - 1\otimes x)
(e'_0\otimes e_1).
\end{align*}

Now, consider the complexes $Q_{\bu}\otimes_{\tau^{-1}} P_{\bu}$ 
and $P_{\bu}\otimes_{\tau} Q_{\bu}$.
We wish to define
$(A\ot_{\tau}B)^e$-module homomorphisms $\tau_0, \tau_1$ and $\tau_2$
so that $\tau_{\bu}$ is a chain map from $Q_{\bu}\ot_{\tau^{-1}} P_{\bu}$ to $P_{\bu}\ot_{\tau} Q_{\bu}$. Calculations show that the following values define such a chain map
\begin{equation*}
\tau_0(e_0'\otimes e_0)= e_0\otimes e_0', \ \
\tau_1(e_1'\otimes e_0)= e_0\otimes e_1', \ \ 
\tau_1(e_0'\otimes e_1) = e_1\otimes e_0', \ \
\tau_2(e_1'\otimes e_1)= -e_1\otimes e_1'.
\end{equation*}
We may now set $\sigma'_{\bu} = 1_P\ot \tau_{\bu}\ot 1_Q$ 
which induces a map 
\[
   \sigma_{\bu} :  (P_{\bu}\ot_{\tau}Q_{\bu})\ot _{A\ot_{\tau}B} 
   (P_{\bu}\ot_{\tau}Q_{\bu})
   \rightarrow (P_{\bu}\ot_A P_{\bu})\ot_{\tau}(Q_{\bu}\ot_BQ_{\bu}),
\]
similar to the setting in Lemma~\ref{a}.
We may check that condition~(\ref{eqn:sigma-condn}) holds. 

A diagonal map
$\Delta_P$ on $P_{\bu}$ is given by 
$\Delta_P(e_0) = e_0\otimes e_0$,  
$\ \Delta_P(e_1)= e_0\otimes e_1 + e_1\otimes e_0$,
and similarly $\Delta_Q$ on $Q_{\bu}$.
(These are the maps given by viewing $P_{\bu}$, $Q_{\bu}$
as subcomplexes of bar complexes.) 

We define a diagonal map $\Delta$ on $P_{\bu}\ot_{\tau}Q_{\bu}$, 
taking $\Delta$ to be the
composition of the following maps in each degree: 
$$P_{\bu}\ot Q_{\bu}\stackrel{\Delta_P'\otimes\Delta_Q'}{\relbar\joinrel\relbar\joinrel\relbar\joinrel\longrightarrow} (P_{\bu}\ot P_{\bu}) \ot (Q_{\bu} \ot Q_{\bu})\stackrel{1_P\otimes \tau^{-1}_{\bu} \otimes 1_Q}{\relbar\joinrel\relbar\joinrel\relbar\joinrel\relbar\joinrel\relbar\joinrel\longrightarrow}
P_{\bu}\ot Q_{\bu}\ot P_{\bu}\ot Q_{\bu} \rightarrow
 (P_{\bu}\ot Q_{\bu}) \ot_{A\ot_{\tau}B} (P_{\bu}\ot  Q_{\bu}) , $$
where the last map is a quotient map,
and by $\Delta_P'$ we mean the map to $P_{\bu}\ot P_{\bu}$ defined by
an analogous formula as that of the diagonal map to $P_{\bu}\ot_A P_{\bu}$, and similarly $\Delta_Q'$. 
By construction, $\Delta$ is an $(A\ot_{\tau} B)^e$-module homomorphism
and a chain map. 
Under this composition of maps, we find that 
\begin{align*}
\Delta(e_0\ot e_0') &= (e_0\ot e_0')\ot (e_0\ot e_0'),\\
\Delta (e_1\otimes e'_0) &= (e_0\otimes e'_0)\otimes (e_1\otimes e'_0) + (e_1\otimes e'_0)\otimes (e_0\otimes e'_0) , \\
\Delta (e_0\otimes e'_1) &= (e_0\otimes e'_1)\otimes (e_0\otimes e'_0) + (e_0\otimes e'_0)\otimes (e_0\otimes e'_1),\\
\Delta (e_1\otimes e'_1) &= 
(e_0\otimes e'_0)\otimes (e_1\otimes e'_1) - (e_0\otimes e'_1)\otimes (e_1\otimes e'_0)\\
& \quad +(e_1\otimes e'_0)\otimes (e_0\otimes e'_1)+ (e_1\otimes e'_1)\otimes (e_0\otimes e'_0).
\end{align*}
Direct calculations show that this diagonal map $\Delta$
makes $P_{\bu}\ot_{\tau} Q_{\bu}$ a counital differential graded coalgebra,
so the hypothesis of Theorem~\ref{thm:bracket} holds. 
We may now apply Theorem~\ref{mainthm} 
to calculate Gerstenhaber brackets. By formulas~(\ref{eqn:psi-f}) and~(\ref{3}), 
$\psi_f=\phi(1\ot f\ot 1)\Delta^{(2)}$ and 
$\phi=(\phi_P\ot \mu_Q\ot 1_Q+1_P\ot \mu_P\ot \phi_Q) \sigma$.
We will use formulas for $\phi_P$ and $\phi_Q$ from~\cite[Section 4]{NW}: 
$\phi_P(e_0 \ot x^t e_0)=\sum_{i=0}^{t-1}x^i e_1 x^{t-i-1}$ and 
similarly $\phi_Q$.
These values of $\phi$ and $\Delta$ may be used to calculate
the Gerstenhaber bracket of any two Hochschild cocycles via
Theorem~\ref{thm:bracket}. 
We do one such calculation to explain the technique. 

Let $f\in \HH^1(A\ot_{\tau}B)\cong k \oplus  k[y]$ and $g\in \HH^2(A\ot_{\tau}B)\cong  k[y]$. 
We take $f$ to correspond to $y$ and $g$ to $y^3$,
so that 
\[
   f(e_0\otimes e'_1)=y, \ \ \ f(e_1\otimes e'_0)=x, 
   \ \ \ g(e_1\otimes e'_1)=y^3.
\]
We calculate the value of the bracket $[f,g]$ on $e_1\ot e'_1$:
\begin{align*}
f\psi_g(e_1\ot e'_1)
&=f \phi (1\ot g\ot 1)(\Delta\ot 1)(\Delta(e_1\ot e'_1))\\
&=f \phi (1\ot g\ot 1)(\Delta\ot 1)((e_0\ot e'_0)\ot (e_1\ot e'_1)- (e_0\ot e'_1)\ot (e_1\ot e'_0)\\
&\quad + (e_1\ot e'_0)\ot (e_0\ot e'_1) + (e_1\ot e'_1)\ot (e_0\ot e'_0))\\
&=f \phi((1\ot g\ot 1)
[(e_0\ot e'_0)\ot (e_0\ot e'_0)\ot (e_1\ot e'_1)\\
&\quad -(e_0\ot e'_1)\ot (e_0\ot e'_0)\ot (e_1\ot e'_0)
 -(e_0\ot e'_0)\ot (e_0\ot e'_1)\ot (e_1\ot e'_0)\\
&\quad +(e_0\ot e'_0)\ot (e_1\ot e'_0)\ot (e_0\ot e'_1)
 +(e_1\ot e'_0)\ot (e_0\ot e'_0)\ot (e_0\ot e'_1)\\
&\quad +(e_0\ot e'_0)\ot (e_1\ot e'_1)\ot (e_0\ot e'_0) 
-(e_0\ot e'_1)\ot (e_1\ot e'_0)\ot (e_0\ot e'_0)\\
&\quad +(e_1\ot e'_0)\ot (e_0\ot e'_1)\ot (e_0\ot e'_0) 
+(e_1\ot e'_1)\ot (e_0\ot e'_0)\ot (e_0\ot e'_0)]\\
&=f \phi(e_0\ot e_0'y^3 \ot e_0\ot e_0')\\
&=f ((\phi_P \ot \mu_Q\ot 1_Q)+ (1_P\ot\mu_P\ot \phi_Q)) 
  (e_0\ot e_0 \ot e_0' y^3\ot e_0')\\
&=f(y^2e_0\ot e_1' + ye_0\ot e_1' y + e_0\ot e_1'y^2)\\
&=y^3+y^3+y^3 \ \ = \ \ 3y^3 ,
\end{align*}
and
\begin{align*}
g\psi_f(e_1\ot e'_1)
&=g \phi (1\ot f\ot 1)(\Delta\ot 1)(\Delta(e_1\ot e'_1))\\
&=g \phi (1\ot f\ot 1)(\Delta\ot 1)((e_0\ot e'_0)\ot (e_1\ot e'_1)- (e_0\ot e'_1)\ot (e_1\ot e'_0) \\
&\quad + (e_1\ot e'_0)\ot (e_0\ot e'_1) + (e_1\ot e'_1)\ot (e_0\ot e'_0))\\
&=g \phi((1\ot f\ot 1)
[(e_0\ot e'_0)\ot (e_0\ot e'_0)\ot (e_1\ot e'_1)\\
& \quad -(e_0\ot e'_1)\ot (e_0\ot e'_0)\ot (e_1\ot e'_0
 -(e_0\ot e'_0)\ot (e_0\ot e'_1)\ot (e_1\ot e'_0)\\
&\quad +(e_0\ot e'_0)\ot (e_1\ot e'_0)\ot (e_0\ot e'_1)
 +(e_1\ot e'_0)\ot (e_0\ot e'_0)\ot (e_0\ot e'_1)\\
&\quad +(e_0\ot e'_0)\ot (e_1\ot e'_1)\ot (e_0\ot e'_0)
 -(e_0\ot e'_1)\ot (e_1\ot e'_0)\ot (e_0\ot e'_0)\\
& \quad +(e_1\ot e'_0)\ot (e_0\ot e'_1)\ot (e_0\ot e'_0)
+(e_1\ot e'_1)\ot (e_0\ot e'_0)\ot (e_0\ot e'_0)]\\
&=g \phi(-e_0\ot e_0'y \ot e_1\ot e_0' + e_0\ot e_0'\ot xe_0\ot e_1' \\
&\quad - e_0\ot e_1'\ot xe_0\ot e_0'
 + e_1\ot e_0'y \ot e_0\ot e_0') \\
&=g ((\phi_P \ot \mu_Q\ot 1_Q)+ (1_P\ot \mu_P\ot \phi_Q))[ - e_0\ot e_1 \ot e_0'y \ot e_0' - xe_0\ot e_1 \ot e_0' \ot e_0'\\
&\quad  - e_0\ot e_1x \ot e_0' \ot e_0' 
 + e_0\ot xe_0 \ot e_0' \ot e_1'  - e_0\ot xe_0 \ot e_1' \ot e_0' + e_1\ot e_0 \ot e_0'y \ot e_0' ]\\
&=g(e_1\ot e_1' + e_1\ot e_1') \ \ = \ \ 2y^3 .
\end{align*}
Hence, $[f,g](e_1\ot e'_1)=3y^3-2y^3=y^3$, and we have $[f,g]= g$. 

\begin{rema}
{\em
In the notation of~\cite[Theorem 6.6]{LS}, take $h=x^2$, $n=1$,
$\pi_h=x$, and $a_1=xy$. 
Our $f$ is then their ad$_{a_1}$, and our bracket calculation agrees with theirs.
}\end{rema}


\begin{thebibliography}{99}

\bibitem{BLO}
\textsc{G. Benkart, S. A. Lopes, and M. Ondrus},
\textit{Derivations of a parametric family of subalgebras of the Weyl algebra},
J. Algebra 424 (2015), 46--97. 

\bibitem{TTPA}
  \textsc{A. \v{C}ap, H. Schichl and J. Van\v{z}ura},
  \textit{On twisted tensor products of algebras},
  Comm. Algebra 23 (1995), no. 12, 4701--4735.

\bibitem{CS}\textsc{S.\ Chouhy and A.\ Solotar},
\textit{Projective resolutions of associative algebras and ambiguities},
J. Algebra 432 (2015), 22--61.

\bibitem{GNW} \textsc{L. Grimley, V. C. Nguyen and S. Witherspoon}, \textit{Gerstenhaber brackets on Hochschild cohomology of twisted tensor products}, J. Noncommutative Geometry 11 (2017), no. 4, 1351--1379.

\bibitem{GG} \textsc{J. A.  Guccione and J. J. Guccione},
\textit{Hochschild homology of twisted tensor products},
K-Theory 18 (1999), no.\ 4, 363--400.

\bibitem{JLS} \textsc{P.\ Jara Martinez, J.\ L\'opez Pe\~na, and D.\ \c{S}tefan},
\textit{Koszul pairs: Applications}, arXiv:1011.4243.

\bibitem{Kra} \textsc{U.\ Kr\"ahmer},
\textit{Notes on Koszul algebras},
Semantic Scholar pdf, semanticscholar.org, 2011. 


\bibitem{LS} \textsc{S.\ A.\ Lopes and A.\ Solotar},
\textit{Lie structure on the Hochschild cohomology of a family of
subalgebras of the Weyl algebra}, arXiv:1903.01226.

\bibitem{NW} \textsc{C. Negron and S. Witherspoon}, \textit{An alternate approach to the Lie bracket on Hochschild cohomology}, Homology, Homotopy and Applications 18 (2016), no.1, 265-285.


\bibitem{PP} \textsc{A.\ Polishchuk and L.\ Positselski},
\textit{Quadratic Algebras},
University Lecture Series 37, Amer.\ Math.\ Soc., Providence,
Rhode Island, 2005. 

\bibitem{RTTP}
  \textsc{A.V. Shepler and S. Witherspoon},
  \textit{Resolutions for twisted tensor products},
   Pacific J.~Math 298 (2019), no.\ 2, 445--469.

\bibitem{SW19} 
\textsc{A.V. Shepler and S. Witherspoon},
\textit{Gerstenhaber brackets for skew group algebras in positive characteristic},
arXiv:1905.09613.

\bibitem{Sh} 
\textsc{E.N. Shirikov}, 
\textit{Two-generated graded algebras}, Algebra Discrete Math.(2005), no.3, 60-84.

\bibitem{SA}
\textsc{M.\ Su\'arez-\'Alvarez},
\textit{A little bit of extra functoriality for Ext and the computation
of the Gerstenhaber bracket}, J.\ Pure Appl.\ Algebra 221 (2017), no.\ 8,
1981--1998.

\bibitem{HCSW}
  \textsc{S. Witherspoon},
  \textit{Hochschild Cohomology for Algebras},
Graduate Studies in Mathematics, American Mathematical Society, to appear. 


\bibitem{Walton-W2} \textsc{C. Walton and S. Witherspoon},
\textit{PBW deformations of braided products}.
J. Algebra 504 (2018), 536--567. 

\bibitem{Volkov}
\textsc{Y. Volkov}, \textit{Gerstenhaber bracket on the Hochschild cohomology via an arbitrary resolution}, arXiv:1610.05741.

\end{thebibliography}
\end{document}